\numberwithin{equation}{section}
\newtheorem{theorem}{Theorem}[section]
\newtheorem{proposition}[theorem]{Proposition}
\newtheorem{lemma}[theorem]{Lemma}
\theoremstyle{definition}
\newtheorem{definition}[theorem]{Definition}
\newtheorem{remark}[theorem]{Remark}
\newtheorem{example}[theorem]{Example}
\begin{document}

\baselineskip=15pt

\title[Logarithmic Cartan geometry on complex manifolds]{Logarithmic Cartan geometry on complex
manifolds with trivial logarithmic tangent bundle}

\author[I. Biswas]{Indranil Biswas}

\address{Mathematics Department, Shiv Nadar University, NH91, Tehsil
Dadri, Greater Noida, Uttar Pradesh 201314, India}

\email{indranil29@gmail.com, indranil@math.tifr.res.in}

\author[S. Dumitrescu]{Sorin Dumitrescu}

\address{Universit\'e C\^ote d'Azur, CNRS, LJAD, France}

\email{dumitres@unice.fr}

\author[A. S. Morye]{Archana S. Morye}

\address{School of Mathematics and Statistics, University of Hyderabad, Gachibowli, Central University P O, Hyderabad
500046, India}

\email{sarchana.morye@gmail.com}

\subjclass[2010]{32M12, 32L05, 53C30}

\keywords{Logarithmic Cartan geometry, weak homogeneity, logarithmic tangent bundle, connection}

\date{}

\begin{abstract}
Let $M$ be a compact complex manifold, and $D\, \subset\, M$ a reduced normal crossing divisor on it, 
such that the logarithmic tangent bundle $TM(-\log D)$ is holomorphically trivial. Let ${\mathbb A}$ 
denote the maximal connected subgroup of the group of all holomorphic automorphisms of $M$ that preserve 
the divisor $D$. Take a holomorphic Cartan geometry $(E_H,\,\Theta)$ of type $(G,\, H)$ on $M$, where 
$H\, \subset\, G$ are complex Lie groups. We prove that $(E_H,\,\Theta)$ is isomorphic to $(\rho^* 
E_H,\,\rho^* \Theta)$ for every $\rho\, \in\, \mathbb A$ if and only if the principal $H$--bundle $E_H$ 
admits a logarithmic connection $\Delta$ singular on $D$ such that $\Theta$ is preserved by the 
connection $\Delta$.
\end{abstract}

\maketitle

\tableofcontents

\section{Introduction}\label{se1}

Let $G$ be a connected complex Lie group with Lie algebra $\mathfrak g$ and $H\, \subset\, G$ a 
closed connected complex Lie subgroup. A holomorphic Cartan geometry of type $(G,\,H)$ on 
a connected complex manifold $M$ is a pair of the form $(E_H,\,\Theta)$, where $E_H$ is a holomorphic 
principal $H$-bundle over $M$, and
$$
\Theta\,:\,T{E_H}\,\,\xrightarrow{\,\,\,\,\sim\,\,\,}\,\, E_H \times{\mathfrak g}
$$
is a holomorphic isomorphism of vector bundles, such that
\begin{enumerate}
\item $\Theta$ is $H$--equivariant, and

\item the restriction of $\Theta$ to every fiber of $E_H$ coincides with the Maurer-Cartan form
on the fiber for the action of $H$ on it.
\end{enumerate}
(See \cite{Sh}, \cite{BD}.) Note that the first condition implies that $\dim M\,=\, \dim G/H$. Many geometric structures are
special cases of Cartan geometries.

Here we consider $M$ to be a connected compact complex manifold equipped with a normal crossing divisor 
$D\, \subset\, M$ such that the logarithmic tangent bundle $TM(-\log D)$ is holomorphically trivial. 
Such pairs $(M,\, D)$ were classified in \cite{Wi} (recalled here in Theorem \ref{th1} and 
Theorem \ref{th2}). Consider the group of all holomorphic automorphisms of $M$ that preserve $D$. Let 
$\mathbb A$ denote that maximal connected subgroup of it. This $\mathbb A$ is a connected complex Lie 
group that acts transitively on the complement $M\setminus D$. Take a holomorphic Cartan geometry 
$(E_H,\,\Theta)$ type $(G,\, H)$ on $M$. It is called weakly homogeneous if $(E_H,\,\Theta)$ is 
isomorphic to the holomorphic Cartan geometry $(\rho^* E_H,\,\rho^* \Theta)$ for every $\rho\, \in\, 
\mathbb A$.

Our main theorem is the following (see Theorem \ref{thm2}):

\textit{A holomorphic Cartan geometry $(E_H,\,\Theta)$ of type $(G,\, H)$ on $M$ is weakly homogeneous if and only if
the principal $H$--bundle $E_H$ admits a logarithmic connection $\Delta$ singular on $D$ such
that $\Theta$ is preserved by the connection $\Delta$.}

See Definition \ref{def5} for the above stated condition that $\Delta$ preserves $\Theta$.

Section \ref{section Log Cartan} extends Theorem \ref{thm2} to the broader class of logarithmic 
Cartan geometries, which were introduced in \cite{BDM}. As 
before, a logarithmic Cartan geometry is called weakly homogeneous if its isomorphism class 
does not change under the pullback operation through the elements of the automorphism group 
$\mathbb A$. Take any logarithmic Cartan geometry on $(M,\, D)$ of type $(G,\, H)$. If it is 
weakly homogeneous, then it is shown that the underlying holomorphic principal $H$--bundle on 
the complement $M\setminus D$ admits a special logarithmic connection (see Theorem \ref{thm1}).

\section{Cartan geometry and group action}

\subsection{Holomorphic Cartan geometry}

We shall denote by $G$ a connected complex Lie group; its Lie algebra will
be denoted by $\mathfrak g$. Let $H\,<\,G$ be a closed connected complex Lie subgroup with
Lie algebra ${\mathfrak h}\, \subset\, {\mathfrak g}$. Let $M$ be a connected compact complex manifold
and
\begin{equation}\label{x1}
\pi\,:\,E_H\,\longrightarrow\, M
\end{equation}
a holomorphic principal $H$-bundle over $M$. The holomorphic tangent bundle of $E_H$ will be denoted
by $T{E_H}$. For any $h\, \in\, H$, let
$$
R_h\, :\, E_H\,\longrightarrow\, E_H,\, \ z\, \longmapsto\, zh
$$
be the automorphism given by $h$. We have the differential
$$
dR_h\, :\, TE_H\,\longrightarrow\, TE_H
$$
of the above map $R_h$.
The action of the group $H$ on $E_H$ produces an action of $H$ on the tangent bundle $T{E_H}$. In other
words, for $p \,\in\, E_H$, a tangent vector $v\,\in\, T_p E_H$ and $h\,\in\, H$,
$$v\cdot h \,=\, dR_h (v).$$

Let
\begin{equation}\label{x-1}
d\pi \,:\, T{E_H} \,\longrightarrow\, \pi^* TM
\end{equation}
be the differential of the projection $\pi$ in \eqref{x1}. The action of $H$ on $E_H$ identifies the
kernel ${\ker}(d\pi)\, \subset\, T{E_H}$ of $d\pi$ with the trivial vector bundle on $E_H$ with fiber
$\mathfrak h$. The adjoint bundle ${\rm ad}(E_H)$ is defined to be the vector bundle $E_H\times^H
{\mathfrak h}$ on $M$ associated to $E_H$ for the adjoint action of $H$ on its Lie algebra
${\mathfrak h}$. From the above identification of ${\ker}(d\pi)$ with $E_H\times {\mathfrak h}$
it follows that ${\rm ad}(E_H)\,=\, {\ker}(d\pi)/H$. Given a section
$v$ of ${\rm ad}(E_H)$ defined on $U\, \subset\, M$, we shall use the same notation $v$
for its pull-back to a section of ${\ker}(d\pi) \,\subset \,T{E_H}$ on $\pi^{-1}(U)$.

\begin{definition}\label{de1}
A {\it holomorphic Cartan geometry} of type $(G,\,H)$ on $M$ is a pair $(E_H,\,\Theta)$, where
$E_H$ is a holomorphic principal $H$-bundle over $M$, and
\begin{equation}\label{x2}
\Theta\,:\,T{E_H}\,\,\xrightarrow{\,\,\,\,\sim\,\,\,}\,\, E_H \times{\mathfrak g}
\end{equation}
is a holomorphic isomorphism of vector bundles, such that
\begin{enumerate}
\item $\Theta$ is $H$--equivariant, and

\item the restriction of $\Theta$ to every fiber of $\pi$ (see \eqref{x1}) coincides with the Maurer-Cartan form
on the fiber for the action of $H$ on it.
\end{enumerate}
\end{definition}

{}From the condition in Definition \ref{de1} that $\Theta$ is an isomorphism it follows immediately
that $\dim M \,=\, \dim G - \dim H \,=\, \dim G/H$.

Consider the holomorphic principal $G$-bundle over $M$
\begin{equation}\label{x0}
E_G\,= \,E_H\times^H G \, \longrightarrow\, M
\end{equation}
obtained by extending the structure group of the principal $H$-bundle $E_H$ using the inclusion map of $H$ in 
$G$. We recall that the adjoint bundle ${\rm ad}(E_G)$ is the vector bundle $E_G\times^G{\mathfrak g}$ on $M$
associated to $E_G$ for the adjoint action of $G$ on its Lie algebra ${\mathfrak g}$. Therefore,
${\rm ad}(E_G)$ coincides with the vector bundle $E_H\times^H{\mathfrak g}$
associated to $E_H$ for the adjoint action of $H$ on ${\mathfrak g}$.

The isomorphism $\Theta$ in \eqref{x2} produces an isomorphism
\begin{equation}\label{x3}
\Theta_H\,:\, (TE_H)/H \,\,\xrightarrow{\,\,\,\,\sim\,\,\,}\,\, (E_H\times {\mathfrak g})/H\,=\, 
E_H\times^H {\mathfrak g}\,=\, {\rm ad}(E_G)
\end{equation}
of the quotients because $\Theta$ is $H$--equivariant. Recall that the Atiyah bundle $\text{At}(E_H)\,
\longrightarrow\, M$ for $E_H$ is, by definition,
\begin{equation}\label{x3c}
\text{At}(E_H)\,\, :=\,\, (TE_H)/H ,
\end{equation}
and it fits into the Atiyah exact sequence
\begin{equation}\label{x3a}
0\,\longrightarrow\, \text{ad}(E_H) \,\longrightarrow\, \text{At}(E_H) \,\longrightarrow\, TM \,\longrightarrow\, 0
\end{equation}
(see \cite{At}). From \eqref{x3} and \eqref{x3c} it follows that
the isomorphism $\Theta_H$ in \eqref{x3} is an isomorphism
\begin{equation}\label{x3b}
\Theta_H\,:\, \text{At}(E_H) \,\,\xrightarrow{\,\,\,\,\sim\,\,\,}\,\, \text{ad}(E_G).
\end{equation}

The Atiyah bundle of a holomorphic vector bundle of rank $r$ is the Atiyah bundle of the corresponding
principal $\text{GL}(r,{\mathbb C})$--bundle.

The isomorphism $\Theta_H$ in \eqref{x3} induces a holomorphic connection on $E_G$
\cite{Sh}, \cite[(2.8)]{BD}, which in turn produces a holomorphic connection on ${\rm ad}(E_G)$.
Consequently, we have a holomorphic differential operator
\begin{equation}\label{x4}
{\mathcal D}\,:\, {\rm ad}(E_G) \,\longrightarrow\, \Omega^1_M \otimes{\rm ad}(E_G)
\end{equation}
of order one.

Let $(E_H,\,\Theta)$ and $ (F_H,\,\Phi)$ be two holomorphic Cartan geometries of common
type $(G, \,H)$ on $M$.
An {\it isomorphism} $(E_H,\,\Theta)\,\,\xrightarrow{\,\,\,\sim\,\,}\,\, (F_H,\,\Phi)$ of holomorphic Cartan geometries
is a holomorphic isomorphism $\beta\,:\,E_H \,\longrightarrow\, F_H$ of principal $H$-bundles that takes $\Theta$ to
$\Phi$ so that the diagram
\begin{equation}\label{eq:iso}
	\begin{tikzcd}
		  & TE_H \arrow[r, "\Theta"] \arrow[dl] \arrow[dd, "d\beta"]&  E_H \times^H {\mathfrak g}  \arrow[dd, "\widetilde{\beta}"] \\
		  M &&\\
		& T{F_H} \arrow[ul] \arrow[r, "\Phi"] & F_H \times^H {\mathfrak g}
	\end{tikzcd}
\end{equation}
is commutative, where $\widetilde\beta$ is given by the map
$$
\beta\times{\rm Id}_{\mathfrak g}\, :\, E_H\times{\mathfrak g}\, \longrightarrow\, F_H\times{\mathfrak g}
$$
and $d\beta\,:\, TE_H\, \longrightarrow\, \beta^* TF_H$ is the differential of the map $\beta$.

\subsection{Logarithmic tangent bundle}\label{se2.2}

Let $M$ be a connected complex manifold. A reduced effective divisor $D\, \subset\, M$ is called
a \textit{normal crossing divisor} if each irreducible component of $D$ is smooth and the
irreducible components of $D$
intersect transversally. For a normal crossing divisor $D$, the logarithmic tangent bundle $TM(-\log D)$
is the subsheaf of the holomorphic tangent bundle $TM$ defined by the following condition: A holomorphic
vector field $v$ on $U\, \subset\, M$ lies in $TM(-\log D)$ if $v(f) \, \in\, H^0(U,\,
{\mathcal O}_U(-U\bigcap D))$ for all $f \, \in\, H^0(U,\, {\mathcal O}_U(-U\bigcap D))$. The
subsheaf $TM(-\log D)\, \subset\, TM$ is actually locally free, and it is closed under the Lie
bracket operation.

Consider all pairs of the form $(M,\, D)$, where $M$ is a compact complex manifold and $D\, \subset\,
M$ is a normal crossing divisor, such that the holomorphic vector bundle $TM(-\log D)$ is
holomorphically trivial. They were classified in \cite{Wi}, which is briefly recalled below.

Take any pair $(M,\, D)$ of the above type. Denote by
$\text{Aut}_D(M)$ the group of all holomorphic automorphisms of $M$ that preserve $D$.
Let $M_0\,:=\,M\setminus D$ be the complement. Denote by
\begin{equation}\label{x5}
{\mathbb A}\, \, \subset\,\, \text{Aut}_D(M)
\end{equation}
the connected component containing the identity element. This $\mathbb A$ is a finite 
dimensional connected complex Lie group. The natural action of $\mathbb A$ on $M_0$ is 
transitive. The isotropy subgroup in $\mathbb A$ of every point in $M_0$ is discrete. Let $Z$ 
denote the connected component of the center of $\mathbb A$ containing the identity element.

Let us recall now two theorems of \cite{Wi}.

\begin{theorem}[{\cite[p.~196, Theorem 1]{Wi}}]\label{th1}
There is smooth equivariant compactification $Z\, \hookrightarrow\, \overline{Z}$, a compact complex 
parallelizable manifold $B$, and a locally holomorphically trivial fibration
$$
\varpi\, :\, M\, \longrightarrow\, B\, ,
$$
such that
\begin{itemize}
\item $Z$ is a semi-torus (we recall
that a complex Lie group $C$ is called a semi-torus if it is a quotient of the additive group
$({\mathbb C}^{\dim C},\, +)$ by a discrete subgroup that generates the vector space
${\mathbb C}^{\dim C}$),

\item any isotropy subgroup of $Z$ for its action on $\overline{Z}$ is a semi-torus,

\item the typical fiber of $\varpi$ is $\overline{Z}$, and the structure group of the holomorphic fiber bundle $\varpi$
is $Z$,

\item the projection $\varpi$ is $\mathbb A$--equivariant and it admits a holomorphic connection
preserved by the action of $\mathbb A$, and

\item the quotient Lie group ${\mathbb A}/Z$ acts transitively on $B$ with discrete isotropies.
\end{itemize}
\end{theorem}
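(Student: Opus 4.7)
The plan is to exploit the global sections of the trivial logarithmic tangent bundle to describe $\mathbb{A}$ infinitesimally, then extract the fibration $\varpi$ from the action of the center. First I would identify the Lie algebra of $\mathbb{A}$ with $\mathfrak{a} := H^0(M,\, TM(-\log D)) \cong \mathbb{C}^n$, where $n = \dim_{\mathbb{C}} M$: logarithmic vector fields integrate to one-parameter groups of $D$-preserving biholomorphisms, and the evaluation $\mathfrak{a} \otimes_{\mathbb{C}} \mathcal{O}_M \xrightarrow{\sim} TM(-\log D)$ forces the action on $M_0$ to be transitive with discrete stabilizers, so that $M_0 \cong \mathbb{A}/\Gamma$ for some discrete $\Gamma \subset \mathbb{A}$.

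Next I would analyze the center. Let $\mathfrak{z} \subset \mathfrak{a}$ denote the center of the Lie algebra, and $Z \subset \mathbb{A}$ the corresponding connected closed (hence abelian, normal) subgroup. Central elements of $\mathfrak{a}$ are global logarithmic vector fields, so the $Z$-action extends holomorphically to all of $M$ and preserves $D$; the $Z$-orbit closures foliate $M$ in an $\mathbb{A}$-equivariant manner, and the leaf space is the claimed base $B$ with $\varpi$ the quotient map. To make this rigorous and produce the invariant holomorphic connection, I would choose an $\mathrm{Ad}(\mathbb{A})$-invariant linear complement $\mathfrak{b} \subset \mathfrak{a}$ to $\mathfrak{z}$; its evaluation gives an $\mathbb{A}$-invariant holomorphic distribution on $M$ transverse to the $Z$-orbits, which integrates to local trivializations of $\varpi$ (with structure group $Z$) and simultaneously trivializes $TB$ upon pushforward, exhibiting $B$ as a compact parallelizable manifold under a transitive $\mathbb{A}/Z$-action with discrete isotropies.

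The main obstacle is the local analysis at $D$ that yields the semi-torus conclusions. The typical fiber of $\varpi$ is an orbit closure $\overline{Z \cdot m}$, smooth by the local product structure coming from $\mathfrak{b}$; after passing to a finite cover so that $Z$ acts faithfully on the generic fiber, this provides the equivariant compactification $Z \hookrightarrow \overline{Z}$. The semi-torus property of $Z$ itself is then proved by contradiction: writing $Z \cong \mathbb{C}^k / \Lambda$ with $\Lambda$ discrete, if $\Lambda \otimes_{\mathbb{Z}} \mathbb{R}$ fails to span $\mathbb{C}^k$, then $Z$ admits a quotient to $\mathbb{C}$ or $\mathbb{C}^\ast$, whose compactification contributes a non-trivial line bundle summand (for instance $T\mathbb{CP}^1(-\log\{\infty\}) \cong \mathcal{O}(1)$) to $TM(-\log D)$, contradicting triviality. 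An analogous linearization at a boundary point $x \in \overline{Z} \setminus Z$ shows the stabilizer $Z_x$ acts on the conormal bundle to its orbit by characters that collectively exhibit $Z_x$ as a semi-torus. This local normal-crossing computation at the boundary is the technical heart of the argument; once it is in place all five bullets of the theorem assemble directly.
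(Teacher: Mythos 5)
You should first be aware that the paper contains no proof of this statement at all: Theorem \ref{th1} is quoted verbatim, with citation, from Winkelmann \cite{Wi} (Theorem 1 of that paper) as background for the classification of pairs $(M,D)$ with $TM(-\log D)$ trivial. So there is no in-paper argument to compare against; what you have written is an attempt to reprove Winkelmann's structure theorem, whose actual proof occupies a substantial part of \cite{Wi} and rests on the structure theory of semi-tori and of their equivariant compactifications.

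Judged on its own terms, the sketch has a concrete fatal step: the choice of an $\mathrm{Ad}(\mathbb A)$--invariant linear complement $\mathfrak b\subset\mathfrak a$ to the center $\mathfrak z$. If $\mathfrak b$ is an $\mathrm{Ad}$--invariant complement, then $\mathfrak b$ is an ideal and $\mathfrak a=\mathfrak z\oplus\mathfrak b$ forces $[\mathfrak a,\mathfrak a]=[\mathfrak b,\mathfrak b]\subseteq\mathfrak b$; this is impossible whenever $[\mathfrak a,\mathfrak a]\cap\mathfrak z\neq 0$, e.g.\ for the Heisenberg algebra. That case genuinely occurs here: the Iwasawa manifold ($D=\emptyset$, $\mathbb A$ a quotient of the complex Heisenberg group, $Z$ an elliptic curve, $B$ an abelian surface) is one of the basic examples covered by the theorem, and for it even the weaker requirement --- an $\mathrm{Ad}(\Gamma_o)$--invariant complement, where $\Gamma_o$ is the (discrete) isotropy of a point of $M_0$, which is what an $\mathbb A$--invariant horizontal distribution on $M_0\cong\mathbb A/\Gamma_o$ actually amounts to --- already fails. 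So the device you use to produce both the local trivializations of $\varpi$ and the invariant connection does not exist in general, and that part of the argument has to be replaced by something else (in \cite{Wi} the fibration and the connection are obtained from the structure of $M$ as an associated bundle $E\times^Z\overline Z$ over the parallelizable base, not from a splitting of $\mathfrak a$). Beyond this, the two assertions you defer --- that $Z$ and all boundary isotropies are semi-tori, and that $\overline Z$ is a smooth equivariant compactification with $\overline Z\setminus Z$ normal crossing --- are precisely the technical core of Winkelmann's theorem; the heuristic ``a $\mathbb C$ or $\mathbb C^*$ quotient would contribute an $\mathcal O(1)$ summand'' and the unspecified ``linearization at a boundary point'' do not yet constitute a proof of either.
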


The Lie algebra of $\mathbb A$ will be denoted by $\mathfrak a$. We have
\begin{equation}\label{la}
{\mathfrak a} \,\,=\,\, H^0(M,\, TM(-\log D)).
\end{equation}

The following is a converse of Theorem \ref{th1}.

\begin{theorem}[{\cite[p.~196, Theorem 2]{Wi}}]\label{th2}
Let $B$ be a connected compact complex parallelizable manifold, $Z$ a semi-torus and $\overline{Z}$ a smooth
equivariant compactification of $Z$. Assume that all the isotropy subgroups for the $Z$--action on $\overline{Z}$
are semi-tori. Let $E$ be a holomorphic principal $Z$--bundle over $B$ admitting a holomorphic connection. Let
$\overline{E}\, :=\, E\times^Z \overline{Z}$ be the holomorphic fiber bundle over $B$ associated to $E$ for the
action of $Z$ on $\overline{Z}$. Denote the divisor $\overline{E}\setminus E\, \subset\, \overline{E}$ by $D$.
Then $D$ is a normal crossing divisor, and $T\overline{E}(-\log D)$ is holomorphically trivial.
\end{theorem}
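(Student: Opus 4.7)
The plan is to establish the two conclusions of Theorem~\ref{th2} separately: first that $D$ is a normal crossing divisor in $\overline{E}$, and then that $T\overline{E}(-\log D)$ admits a global holomorphic frame built from the fibrewise $Z$-action together with the given holomorphic connection on $E\to B$.

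\textbf{Step 1 (normal crossings).} Since $\overline{\pi}\colon \overline{E}\to B$ is locally trivial with fibre $\overline{Z}$, and $D$ is cut out locally by the pullback of the boundary divisor $\partial\overline{Z}:=\overline{Z}\setminus Z$, it suffices to prove that $\partial\overline{Z}$ has normal crossings in $\overline{Z}$. This is where the hypothesis that every $Z$-isotropy on $\overline{Z}$ is a semi-torus enters: a local slice argument at a boundary point reduces the $Z$-action to a product of a free action with the canonical action of a lower-dimensional semi-torus on its own smooth equivariant compactification, and the latter is governed by the standard toric-type local model, whose boundary is a normal crossing divisor.

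\textbf{Step 2 (relative trivialization).} Write $\mathfrak{z}=\mathrm{Lie}(Z)$. The fibrewise $Z$-action on $\overline{E}$ produces a $\mathbb{C}$-linear map from $\mathfrak{z}$ to holomorphic vector fields on $\overline{E}$ tangent to $\overline{\pi}$; since this action preserves $D$, its image lies inside $H^{0}(\overline{E},\,T_{\overline{E}/B}(-\log D))$. A fibrewise check, reducing to the same toric-style local model as in Step 1, upgrades this to a global isomorphism
$$T_{\overline{E}/B}(-\log D)\;\simeq\;\mathcal{O}_{\overline{E}}\otimes_{\mathbb{C}}\mathfrak{z}.$$
The relative logarithmic tangent bundle is then holomorphically trivial, trivialized by the fundamental vector fields of the fibre action.

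\textbf{Step 3 (splitting via the connection).} One has the logarithmic relative tangent sequence
$$0\longrightarrow T_{\overline{E}/B}(-\log D)\longrightarrow T\overline{E}(-\log D)\longrightarrow\overline{\pi}^{*}TB\longrightarrow 0,$$
and a holomorphic splitting of it is produced as follows: the assumed holomorphic connection on the principal $Z$-bundle $E\to B$ is by definition a $Z$-invariant horizontal distribution, which by its $Z$-invariance descends to a horizontal distribution on the associated fibre bundle $\overline{E}$. Since $B$ is parallelizable, $\overline{\pi}^{*}TB$ is trivial, trivialized by horizontal lifts of a global frame of $TB$. Combining this with Step~2 yields a global holomorphic frame of $T\overline{E}(-\log D)$, as required.

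\textbf{Expected main obstacle.} The delicate piece is the fibrewise claim in Step~2: namely, that the fundamental vector fields coming from $\mathfrak{z}$, which a priori are defined only on the open stratum $E$, extend across the boundary as logarithmic vector fields freely generating $T_{\overline{E}/B}(-\log D)$ at every point. This reduces to a toric-type local computation around a point whose $Z$-stabiliser can be a positive-dimensional semi-torus, and it is precisely the hypothesis that all isotropies be semi-tori that provides the local linearisation making that computation go through. Once this local statement is established, the remaining steps are essentially formal.
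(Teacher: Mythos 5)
The paper does not actually prove this statement: Theorem \ref{th2} is quoted verbatim from Winkelmann \cite{Wi} and used as a black box, so there is no internal proof to compare yours against. Judged on its own terms, your outline has the right architecture, and it is the natural one: trivialize the relative logarithmic tangent bundle by the fundamental vector fields of the fibrewise $Z$--action (which is well defined on $\overline{E}=E\times^Z\overline{Z}$ because a semi-torus is abelian, so left translation on $\overline{Z}$ descends to the associated bundle), split off $\overline{\pi}^{*}TB$ using the horizontal distribution induced by the holomorphic connection on $E$, and use parallelizability of $B$ to frame the horizontal part. Your implicit observation that horizontal lifts are automatically logarithmic, because $D$ is a union of $Z$--invariant sub-bundles of $\overline{E}$ with fibres the components of $\overline{Z}\setminus Z$, is correct (and connectedness of $Z$ also gives smoothness of each global irreducible component of $D$, which the paper's definition of normal crossing requires).

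The genuine gap is the one you flag yourself, and it is not a minor technicality: the claim that near a boundary point of $\overline{Z}$ there are coordinates in which $\partial\overline{Z}=\{z_1\cdots z_k=0\}$ and the fundamental vector fields of the $Z$--action span $z_1\partial_{z_1},\dots,z_k\partial_{z_k},\partial_{w_1},\dots,\partial_{w_{n-k}}$ is precisely the content of the theorem in the base case $B=\{\mathrm{pt}\}$, $E=Z$, $\overline{E}=\overline{Z}$; everything else in your argument is formal bookkeeping around that local statement. Establishing it genuinely requires the hypothesis that all isotropies are semi-tori --- for instance, $(\mathbb{C},+)$ acting on $\mathbb{P}^1$ has isotropy $\mathbb{C}$ at $\infty$, which is not a semi-torus, and correspondingly $T\mathbb{P}^1(-\log \infty)\cong\mathcal{O}(1)$ is not trivial --- and when $Z$ is a genuinely non-algebraic semi-torus one cannot simply invoke ``the standard toric local model''; one needs a linearization/slice analysis of the isotropy representation on the normal directions to each boundary orbit, which is the substance of Winkelmann's paper. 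So your proposal is an accurate and well-organized reduction of the theorem to its essential local core, but it does not prove that core.
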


\subsection{Homogeneous Cartan geometry}

As in Section \ref{se2.2}, $M$ is a compact complex manifold, and $D\, \subset\, M$ is
a normal crossing divisor such that $TM(-\log D)$ is holomorphically trivial.
Let $(E_H,\,\Theta)$ be a holomorphic Cartan geometry of type $(G,\,H)$ on $M$.
Take any $\rho\, \in\, {\mathbb A}$ (see \eqref{x5}). Consider the pulled back
holomorphic principal $H$--bundle $\rho^*E_H$ on $M$. There is a natural $H$--equivariant map
$$\widetilde{\rho}\, :\, \rho^*E_H\, \longrightarrow\, E_H$$ over $\rho$. Indeed, $\rho^*E_H$ is the fiber
product of the two maps $\rho\, :\, M\, \longrightarrow\, M$ and $\pi\, :\, E_H\, \longrightarrow\, M$,
and therefore, $\rho^*E_H$ is a submanifold of $M\times E_H$; the above map $\widetilde{\rho}$ is the
restriction, to $\rho^*E_H$, of the natural projection $M\times E_H\, \longrightarrow\, E_H$ to the
second factor. Let
$$
d\widetilde{\rho}\, :\, T(\rho^*E_H)\, \longrightarrow\, TE_H
$$
be the differential of the above map $\widetilde{\rho}$. The composition of maps
$$ T(\rho^*E_H)\,\, \xrightarrow{\,\,\,\, d\widetilde{\rho}\,\,\,}\,\, TE_H \,\, \xrightarrow{\,\,\,\, \Theta\,\,\,}
\,\, E_H\times{\mathfrak g} \,\, \xrightarrow{\,\,\,\, (\widetilde{\rho})^{-1}\times{\rm Id}_{\mathfrak g}\,\,\,}
\,\, \rho^*E_H\times {\mathfrak g}
$$
will be denoted by $\rho^*\Theta$. Note that $(\rho^*E_H,\, \rho^*\Theta)$ is a 
holomorphic Cartan geometry on $M$ of type $(G,\, H)$.

\begin{definition}\label{def2}
The holomorphic Cartan geometry $(E_H,\,\Theta)$ on $M$ of type $(G,\, H)$ is called \textit{weakly homogeneous}
if the holomorphic Cartan geometry $(\rho^*E_H,\, \rho^*\Theta)$ of type $(G,\, H)$ is isomorphic to $(E_H,\,
\Theta)$ for every $\rho\, \in\, {\mathbb A}$.
\end{definition}

Let ${\mathcal G}$ be a connected complex Lie group and
$$
\gamma\,\, :\,\, {\mathcal G}\,\, \longrightarrow\, {\mathbb A}
$$
a holomorphic homomorphism. Using $\gamma$, the natural action of $\mathbb A$ on $M$ produces an
action of $\mathcal G$ on $M$. More precisely, the automorphism of $M$ given by the action any
$g\, \in\, {\mathcal G}$ is the automorphism $\gamma (g)\, \in\, {\mathbb A}\, \subset\, \text{Aut}_D(M)$.
so the action of $g$ on $M$ preserves the divisor $D$.

A holomorphic principal $H$--bundle $E_H$ on $M$ is called $\gamma$--\textit{homogeneous} if $E_H$
is equipped with a holomorphic action of $\mathcal G$ such that
\begin{itemize}
\item the actions of $\mathcal G$ and $H$ on $E_H$ commute, and

\item the natural projection $\pi\, :\, E_H\, \longrightarrow\, M$ (see \eqref{x1}) is $\mathcal G$--equivariant
(it was noted above that $\mathcal G$ acts on $M$).
\end{itemize}

Let $E_H$ be a $\gamma$--\textit{homogeneous} principal $H$--bundle on $M$. The action of $\mathcal G$ on
$E_H$ produces an action of $\mathcal G$ on $TE_H$. Take a homomorphism
$\Theta\,:\,T{E_H}\,\longrightarrow\, E_H \times{\mathfrak g}$ as in \eqref{x2}. The action of $\mathcal G$
on $E_H$ and the trivial action of $\mathcal G$ on $\mathfrak g$ together produce an action of $\mathcal G$ on
$E_H\times{\mathfrak g}$. The homomorphism $\Theta$ will be called
$\gamma$--\textit{invariant} if it is $\mathcal G$--equivariant.

\begin{definition}\label{def-h}
A holomorphic Cartan geometry $(E_H,\,\Theta)$ on $M$ of type $(G,\, H)$ is called
$\gamma$--\textit{homogeneous} if the principal
$H$--bundle $E_H$ is $\gamma$--homogeneous and the homomorphism $\Theta$ is $\gamma$--invariant.
\end{definition}

\begin{remark}\label{rem1}
Consider the identity map ${\rm Id}_{\mathbb A}\, :\, {\mathbb A}\, \longrightarrow\,{\mathbb A}$.
If $(E_H,\, \Theta)$ is an ${\rm Id}_{\mathbb A}$--homogeneous holomorphic Cartan geometry on $M$ of type $(G,\, H)$, then clearly
$(E_H,\, \Theta)$ is weakly homogeneous. However, a weakly homogeneous bundle need not be ${\rm Id}_{\mathbb A}$--homogeneous.
For example, the pullback of the tautological line bundle ${\mathcal O}_{{\mathbb C}{\mathbb P}^n}(1)$ on
${\mathbb C}{\mathbb P}^n$, by any holomorphic automorphism of ${\mathbb C}{\mathbb P}^n$, is isomorphic
to ${\mathcal O}_{{\mathbb C}{\mathbb P}^n}(1)$. But the action of $\text{Aut}({\mathbb C}{\mathbb P}^n)
\,=\,\text{PGL}(n+1, {\mathbb C})$ on ${\mathbb C}{\mathbb P}^n$ does not lift to
${\mathcal O}_{{\mathbb C}{\mathbb P}^n}(1)$. Note that the action of
$\text{PGL}(n+1, {\mathbb C})$ on ${\mathbb C}{\mathbb P}^n$ lifts to ${\mathcal O}_{{\mathbb C}{\mathbb P}^n}(1)$
if and only if the action of $\text{PGL}(n+1, {\mathbb C})$ on ${\mathbb C}{\mathbb P}^n$ lifts to
$H^0({\mathbb C}{\mathbb P}^n,\, {\mathcal O}_{{\mathbb C}{\mathbb P}^n}(1))\ =\, {\mathbb C}^{n+1}$;
we know that the action of $\text{PGL}(n+1, {\mathbb C})$ on ${\mathbb C}{\mathbb P}^n$ does
not lift to ${\mathbb C}^{n+1}$.
\end{remark}

\section{Weakly homogeneous Cartan geometry}

\subsection{Automorphisms of a Cartan geometry}\label{se3.1}

Let $(E_H,\,\Theta)$ be a weakly homogeneous Cartan geometry of type $(G,\, H)$ on $M$. Take
any $\rho\, \in\, \mathbb A$. Note that giving a holomorphic isomorphism
$$
E_H\,\, \longrightarrow\,\, \rho^*E_H
$$
of principal $H$--bundles is equivalent to giving a holomorphic isomorphism
$$
\eta\,\,:\,\, E_H\,\, \longrightarrow\,\, E_H
$$
such that
\begin{itemize}
\item $\eta(zh)\,=\, \eta(z)h$ for all $z\, \in\, E_H$ and $h\, \in\, H$, and

\item $\pi\circ\eta (z)\,=\, \rho(\pi(z))$ for all $z\, \in\, E_H$, where $\pi$ is the projection
$E_H\, \longrightarrow\, M$ as in \eqref{x1}.
\end{itemize}

Denote by $\widetilde{\Gamma}$ the space of all pairs $(\rho,\, \eta)$, where $\rho\, \in\, {\mathbb A}$, and
$\eta\,:\, E_H\, \longrightarrow\, E_H$ is a holomorphic isomorphism  such that
\begin{itemize}
\item $\eta(zh)\,=\, \eta(z)h$ for all $z\, \in\, E_H$ and $h\, \in\, H$,

\item $\pi\circ\eta (z)\,=\, \rho(\pi(z))$ for all $z\, \in\, E_H$, and

\item $\eta$ preserve $\Theta$, meaning the composition of homomorphisms
$$
TE_H \,\, \xrightarrow{\,\,\,d\eta\,\,\,}\,\, TE_H \,\, \xrightarrow{\,\,\,\Theta\,\,\,}\,\, E_H\times {\mathfrak g}
\,\, \xrightarrow{\,\,\,\eta^{-1}\times {\rm Id}_{\mathfrak g}\,\,\,}\,\, E_H\times {\mathfrak g}
$$
coincides with $\Theta$, where $d\eta\, :\, TE_H\, \longrightarrow\, \eta^*TE_H$ is the differential of
the map $\eta$.
\end{itemize}
Since $(E_H,\,\Theta)$ is weakly homogeneous, for every $\rho\, \in\, {\mathbb A}$ there is an isomorphism $\eta$
satisfying the above conditions. This $\widetilde\Gamma$ is a group. Indeed,
$$(\rho_1,\, \eta_1)\cdot (\rho_2,\, \eta_2)\,\,:=\,\, (\rho_1\circ\rho_2,\, \eta_1\circ\eta_2)$$
is a group operation on $\widetilde\Gamma$.

It can be shown that $\widetilde\Gamma$ is a finite dimensional complex Lie group. To see this,
let $$\beta_0\, :\, \widetilde{\Gamma}\, \longrightarrow\, {\mathbb A}$$
be the homomorphism that sends any pair $(\rho,\, \eta)$ as above to $\rho$. The kernel of $\beta_0$
will be denoted by $\text{Aut}_\Theta(E_H)$. So
\begin{equation}\label{x8}
\text{Aut}_\Theta(E_H)\,\,:=\,\, \{(\rho,\, \eta)\,\in\, \widetilde{\Gamma}\,\,\big\vert\,\, \rho\,=\, {\rm Id}_M\}\,\,
\subset\, \widetilde{\Gamma}
\end{equation}
is the group of all holomorphic automorphisms of the principal $H$--bundle $E_H$ that preserve $\Theta$.
Since ${\mathbb A}$ is a finite dimensional complex Lie group,
to prove that $\widetilde\Gamma$ is a finite dimensional complex Lie group it suffices to show that
$\text{Aut}_\Theta(E_H)$ is a finite dimensional complex Lie group. Let
$$\text{Ad}(E_H)\, =\, E_H\times^H H\, \longrightarrow\, M$$
be the holomorphic fiber bundle over $M$ associated to $E_H$ for the adjoint action of $H$ on itself.
So each fiber of $\text{Ad}(E_H)$ is a group isomorphic to $H$. Let
$$\text{ad}(E_H)\, =\, E_H\times^H H\, \longrightarrow\, M$$
be the holomorphic fiber bundle over $M$ associated to $E_H$ for the adjoint action of $H$ on its Lie algebra
$\mathfrak h$. So $\text{ad}(E_H)$ is a holomorphic vector bundle on $M$ whose every fiber is a Lie algebra isomorphic
to $\mathfrak h$. Clearly, $\text{ad}(E_H)$ is the Lie algebra bundle for the bundle $\text{Ad}(E_H)$ of
Lie groups. We note that $\text{Aut}_\Theta(E_H)$ is a closed subgroup of the space of holomorphic sections of
$\text{Ad}(E_H)$. Since $M$ is compact, the space of holomorphic sections of
$\text{Ad}(E_H)$ is a finite dimensional complex Lie group; it's Lie algebra is actually
$H^0(M,\, \text{ad}(E_H))$. Hence $\text{Aut}_\Theta(E_H)$ is a finite dimensional complex Lie group.

Let
\begin{equation}\label{x6}
\Gamma\,\, \subset\,\, \widetilde{\Gamma} 
\end{equation}
be the connected component containing the identity element. It fits in a short exact sequence of complex Lie groups
\begin{equation}\label{x7}
1\, \longrightarrow\, \text{Aut}^0_\Theta(E_H)\, \longrightarrow\, \Gamma\,
\stackrel{\tau}{\longrightarrow}\, {\mathbb A}\,\longrightarrow\, 1,
\end{equation}
where $\text{Aut}^0_\Theta(E_H)\, =\, \text{Aut}_\Theta(E_H)\bigcap \Gamma$ (see \eqref{x8}), so $\text{Aut}_\Theta(E_H)/
\text{Aut}^0_\Theta(E_H)$ is a discrete set, and the projection $\tau$ in \eqref{x7} sends any
$(\rho,\, \eta)\, \in\, \Gamma$ to $\rho$. Note that $\tau$ is surjective because
$(E_H, \Theta)$ is weakly homogeneous. The converse statement --- that $(E_H, \Theta)$ is weakly homogeneous
if $\tau$ is surjective --- is evidently true. So, $(E_H, \Theta)$ is weakly homogeneous
if and only if $\tau$ is surjective.

{}From \eqref{x3c} it follows immediately that
\begin{equation}\label{x10}
H^0(M,\, \text{At}(E_H))\,=\, H^0(E_H,\, TE_H)^H\, \subset\, H^0(E_H,\, TE_H),
\end{equation}
where $H^0(E_H,\, TE_H)^H$ denotes the space of $H$--invariant holomorphic vector fields on $E_H$.

{}From Definition \ref{de1} we know that $\Theta$ is a $\mathfrak g$--valued holomorphic $1$--form on $E_H$ satisfying
certain conditions. Now, for any $\mathfrak g$--valued holomorphic $1$--form $\Psi
\, \in\, H^0(E_H,\, \Omega^1_{E_H}\otimes {\mathfrak g})$ on $E_H$, and any holomorphic vector
field $\gamma\,\in\, H^0(M,\, \text{At}(E_H))$, we define the Lie derivative
$$
L_{\gamma} \Psi\, \in\, H^0(E_H,\, \Omega^1_{E_H}\otimes {\mathfrak g})
$$
as follows:
\begin{equation}\label{x9}
(L_\gamma \Psi)(v)\,\,=\,\, \gamma(\Psi (v)) - \Psi([\gamma,\, v]),
\end{equation}
for all locally defined holomorphic vector fields $v$ on $E_H$; note that all the three terms in \eqref{x9} are locally
defined holomorphic sections of the trivial vector bundle $E_H\times {\mathfrak g}\, \longrightarrow\, E_H$.

\begin{proposition}\label{prop1}
The Lie algebra ${\rm Lie}(\Gamma)$ of the complex Lie group $\Gamma$ in \eqref{x6} is the subspace of
$H^0(E_H,\, TE_H)$ consisting of all $\gamma\, \in\, H^0(E_H,\, TE_H)$ satisfying the following conditions:
\begin{enumerate}
\item $\gamma\, \in\, H^0(E_H,\, TE_H)^H\, \subset\, H^0(E_H,\, TE_H)$,

\item $L_\gamma\Theta\,=\, 0$, where $L_\gamma$ in defined in \eqref{x9}, and

\item $d\pi (\gamma)\, \in\, H^0(M, \, TM(-\log D))$, where $d\pi$ is the homomorphism in \eqref{x-1}.
\end{enumerate}
The Lie bracket operation of ${\rm Lie}(\Gamma)$ is given by the Lie bracket operation of
vector fields on $E_H$.
\end{proposition}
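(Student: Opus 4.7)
The plan is to identify $\mathrm{Lie}(\Gamma)$ with the prescribed subspace of $H^0(E_H,TE_H)$ by passing between one-parameter subgroups of $\Gamma$ and their infinitesimal generators on $E_H$; each of conditions (1)--(3) is obtained by differentiating, at the identity, one of the three defining properties of the elements of $\widetilde{\Gamma}$ described in Section \ref{se3.1}.

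For the forward direction, let $\gamma\in\mathrm{Lie}(\Gamma)$ and let $t\mapsto(\rho_t,\eta_t)\in\Gamma$ be the one-parameter subgroup it generates, so $\gamma=\frac{d}{dt}\big|_{t=0}\eta_t\in H^0(E_H,TE_H)$. Differentiating $\eta_t(zh)=\eta_t(z)h$ at $t=0$ yields the $H$-invariance in (1); differentiating $\pi\circ\eta_t=\rho_t\circ\pi$ gives $d\pi(\gamma)=\dot{\rho}_0\in\mathrm{Lie}(\mathbb{A})=H^0(M,TM(-\log D))$ using \eqref{la}, which is condition (3); and differentiating the $\Theta$-preservation identity on $\eta_t$, interpreted as the infinitesimal pullback of the $\mathfrak{g}$-valued $1$-form $\Theta$, yields $L_\gamma\Theta=0$, which is condition (2). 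Conversely, given $\gamma\in H^0(E_H,TE_H)$ satisfying (1)--(3), the vector field $v:=d\pi(\gamma)\in\mathfrak{a}$ integrates to a one-parameter subgroup $\{\rho_t\}_{t\in\mathbb{R}}\subset\mathbb{A}$; if $\eta_t$ denotes the flow of $\gamma$ on $E_H$, then $\eta_t$ automatically covers $\rho_t$, is $H$-equivariant by (1), and preserves $\Theta$ by (2). Once $\eta_t$ is shown to be defined for all $t\in\mathbb{R}$, the pair $(\rho_t,\eta_t)$ traces a smooth path in $\widetilde{\Gamma}$ through the identity, hence lies in the connected component $\Gamma$, and differentiates at $0$ to $\gamma$.

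The main obstacle will be global existence of the flow of $\gamma$ on the possibly non-compact total space $E_H$. I would reduce it to completeness on the compact base as follows: for fixed $z\in E_H$ above $x=\pi(z)$, the projected curve $\rho_t(x)$ exists for all $t$ since $v$ is complete on the compact manifold $M$, so only the motion of $\eta_t(z)$ along the fibers of $\pi$ has to be controlled. Covering the relatively compact orbit $\{\rho_t(x)\}_{t\in[-T,T]}$ by finitely many trivializing neighborhoods and writing $\eta_t(z)$ locally as $s(\rho_t(x))\cdot h(t)$ for a local holomorphic section $s$ and $h(t)\in H$, the equation $\dot{\eta}_t=\gamma(\eta_t)$ becomes a time-dependent right-invariant ODE on $H$ whose coefficient is continuous in $t$; by general Lie theory such ODEs have solutions on every finite interval, so $\eta_t$ extends to all of $\mathbb{R}$. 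The $H$-invariance (1) ensures this local description glues across trivializations, and $L_\gamma\Theta=0$ from (2) provides the uniform bounds on the coefficient via the coframing given by $\Theta$. Alternatively, one may invoke the standard fact in Cartan geometry that a vector field preserving $\Theta$ whose projection is complete is itself complete. Finally, the identification of the Lie bracket on $\mathrm{Lie}(\Gamma)$ with the bracket of vector fields on $E_H$ is the standard one coming from the natural action $(\rho,\eta)\cdot z=\eta(z)$ of $\Gamma$ on $E_H$, under which one-parameter subgroups correspond to their generating vector fields.
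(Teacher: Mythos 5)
Your proposal is correct and follows essentially the same route as the paper: the paper's proof simply observes that conditions (1)--(3) are the infinitesimal versions of the three defining properties of elements of $\widetilde{\Gamma}$ (equivariance, covering an element of $\mathbb{A}$, and preservation of $\Theta$), which is exactly the differentiation/integration correspondence you carry out. You supply considerably more detail than the paper does --- in particular the completeness of the flow of $\gamma$ on the non-compact total space $E_H$, which the paper treats as evident --- and that argument is sound.
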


\begin{proof}
It is evident that the subspace of $H^0(E_H,\, TE_H)$ satisfying the above three conditions is closed 
under the Lie bracket operation of vector fields. If fact, the subspace of $H^0(E_H,\, TE_H)$ satisfying 
the first condition is closed under the Lie bracket operation of vector fields. Similarly, the subspace 
of $H^0(E_H,\, TE_H)$ satisfying the second condition is closed under the Lie bracket operation of 
vector fields. The same holds for the third condition.

The first condition ensures that $\gamma\, \in\, H^0(M,\, \text{At}(E_H))$ (see \eqref{x3c}). The third 
condition is equivalent to the condition that
\begin{equation}\label{x12}
d\pi (\gamma)\,\in \, H^0(M,\, TM(-\log D)) \,=\, \text{Lie}({\mathbb A})\,=\, {\mathfrak a}
\end{equation}
(see \eqref{x5}, \eqref{la}).
The second condition is equivalent to the condition that the flow generated by $\gamma$ preserves
$\Theta$. The proposition follows from these.
\end{proof}

\subsection{Existence of a logarithmic connection}

Let
$$
{\mathcal D}\,\,:=\,\, \pi^{-1}(D)\,\, \subset\,\, E_H
$$
be the inverse image of $D$, where $\pi$ is the projection in \eqref{x1}. Let
$$
TE_H(-\log {\mathcal D})\,\, \subset\,\, TE_H
$$
be the logarithmic tangent bundle. Note that
\begin{equation}\label{x13}
TE_H(-\log {\mathcal D})\,\, =\,\, (d\pi)^{-1}(\pi^* (TM(-\log D))),
\end{equation}
where $d\pi$ is the homomorphism in \eqref{x-1}. The action of $H$ on $TE_H$ actually preserves
the subsheaf $TE_H(-\log {\mathcal D})$. Indeed, this follows immediately from \eqref{x13}
and the fact that the projection $d\pi \,:\, T{E_H} \,\longrightarrow\, \pi^* TM$ in \eqref{x-1}
is $H$--equivariant. Note that the third condition in Proposition \ref{prop1}, asserting that
$d\pi (\gamma)\, \in\,H^0(M, \, TM(-\log D))$, is equivalent to
the condition that $\gamma\, \in\, H^0(E_H, \, TE_H(-\log {\mathcal D}))$.

Define
$$
\text{At}(E_H)(-\log D) \,\,:=\,\, TE_H(-\log {\mathcal D})/H,
$$
which is a vector bundle on $M$. Recall that that $\text{ad}(E_H)\,=\,\ker\, d\pi/H$,
and hence we have $\text{ad}(E_H)\,\subset\, \text{At}(E_H)(-\log D)$. From \eqref{x3a} and \eqref{x13} we have the short exact sequence
\begin{equation}\label{x14}
0\,\longrightarrow\, \text{ad}(E_H) \,\longrightarrow\, \text{At}(E_H)(-\log D) \,\stackrel{\varphi}{\longrightarrow}\, TM(-\log D)
\,\longrightarrow\, 0.
\end{equation}
Let
\begin{equation}\label{x15}
0\,\longrightarrow\, H^0(M,\, \text{ad}(E_H)) \,\longrightarrow\, H^0(M,\, \text{At}(E_H)(-\log D))
\end{equation}
$$
\stackrel{h_1}{\longrightarrow}\, H^0(M,\, TM(-\log D))\,\stackrel{h_2}{\longrightarrow}\, H^1(M,\, \text{ad}(E_H))
$$
be the long exact sequence of cohomologies associated to \eqref{x14}.

\begin{lemma}\label{lem1}
The homomorphism $h_1$ in \eqref{x15} is surjective. In other words, $h_2$ in \eqref{x15} is the zero homomorphism.
\end{lemma}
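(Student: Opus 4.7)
The plan is to deduce surjectivity of $h_1$ from the weak homogeneity of $(E_H,\,\Theta)$, by exhibiting enough lifts inside the Lie algebra $\mathrm{Lie}(\Gamma)$ of the group $\Gamma$ constructed in Section \ref{se3.1}.

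First I would identify the space $H^0(M,\,\mathrm{At}(E_H)(-\log D))$ with its incarnation on the total space $E_H$. Combining \eqref{x10} with the observation following \eqref{x13} (that condition (3) of Proposition \ref{prop1} amounts to $\gamma\in H^0(E_H,\,TE_H(-\log\mathcal D))$), I get
$$
H^0(M,\,\mathrm{At}(E_H)(-\log D))\,=\,H^0(E_H,\,TE_H(-\log\mathcal D))^H,
$$
and under this identification the map $h_1$ is simply $\gamma\mapsto d\pi(\gamma)\in H^0(M,\,TM(-\log D))$.

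Next I would relate $\mathrm{Lie}(\Gamma)$ to these sections. Proposition \ref{prop1} characterizes $\mathrm{Lie}(\Gamma)$ as those $\gamma\in H^0(E_H,\,TE_H)$ which are $H$--invariant, satisfy $L_\gamma\Theta=0$, and project to $TM(-\log D)$. The first and third conditions together place $\mathrm{Lie}(\Gamma)$ inside $H^0(M,\,\mathrm{At}(E_H)(-\log D))$, and the differential of the projection $\tau$ of \eqref{x7} is precisely the restriction of $h_1$ to $\mathrm{Lie}(\Gamma)$, with image inside $\mathfrak{a}=H^0(M,\,TM(-\log D))$ by \eqref{la} and \eqref{x12}. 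Schematically,
$$
\mathrm{Lie}(\Gamma)\,\hookrightarrow\,H^0(M,\,\mathrm{At}(E_H)(-\log D))\,\xrightarrow{\;h_1\;}\,H^0(M,\,TM(-\log D))\,=\,\mathfrak a,
$$
and the composition agrees with $d\tau$.

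To finish, I invoke weak homogeneity: by the discussion preceding \eqref{x7}, the homomorphism $\tau\colon \Gamma\to\mathbb A$ is surjective. Since $\Gamma$ and $\mathbb A$ are both connected complex Lie groups, a surjective holomorphic homomorphism between them induces a surjective map on Lie algebras (the connected immersed subgroup of $\mathbb A$ with Lie algebra $\mathrm{Im}(d\tau)$ coincides with $\tau(\Gamma)=\mathbb A$, forcing $\mathrm{Im}(d\tau)=\mathfrak a$). Thus $d\tau$ is surjective, and the factorization above forces $h_1$ to be surjective as well. The main conceptual point --- and the only non--formal step --- is the passage from surjectivity of $\tau$ at the group level to surjectivity of $d\tau$ at the Lie algebra level; everything else is bookkeeping of the identifications between $H$--invariant vector fields on $E_H$ and sections of the logarithmic Atiyah bundle on $M$.
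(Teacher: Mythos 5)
Your argument is correct and follows essentially the same route as the paper's proof: identify $H^0(M,\,\mathrm{At}(E_H)(-\log D))$ with the $H$--invariant logarithmic vector fields on $E_H$, observe via Proposition \ref{prop1} that $\mathrm{Lie}(\Gamma)$ sits inside it with $h_1|_{\mathrm{Lie}(\Gamma)} = d\tau$, and deduce surjectivity of $h_1$ from surjectivity of $\tau$ (which is weak homogeneity). The only difference is that you spell out the passage from surjectivity of $\tau$ to surjectivity of $d\tau$, which the paper leaves implicit.
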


\begin{proof}
{}From \eqref{x13} and \eqref{x3c} it follows that $H^0(M,\, \text{At}(E_H)(-\log D))$ is identified with the 
subspace of $H^0(E_H,\, TE_H)$ consisting of all $\gamma\, \in\, H^0(E_H,\, TE_H)$ satisfying the following conditions:
\begin{enumerate}
\item $\gamma\, \in\, H^0(E_H,\, TE_H)^H\, \subset\, H^0(E_H,\, TE_H)$, and

\item $d\pi (\gamma)\, \in\, H^0(M, \, \gamma^*TM(-\log D))$, where $d\pi$ is the homomorphism in \eqref{x-1}.
\end{enumerate}
Consequently, from Proposition \ref{prop1} we conclude that
\begin{equation}\label{x16}
{\rm Lie}(\Gamma)\,\, \subset\,\, H^0(M,\, \text{At}(E_H)(-\log D)).
\end{equation}

Consider the projection $h_1$ in \eqref{x15}. Recall from \eqref{x12} that $H^0(M,\, TM(-\log D))\,=\, 
{\mathfrak a}$. The restriction of $h_1$ (see \eqref{x15}) to the subspace ${\rm Lie}(\Gamma)$ in 
\eqref{x16} coincides with the homomorphism of Lie algebras associated to the projection $\Gamma\, 
\longrightarrow\, {\mathbb A}$ in \eqref{x7}. Since this homomorphism
$\Gamma\, \longrightarrow\, {\mathbb A}$ is surjective, it follows immediately that the restriction of 
$h_1$ to the subspace ${\rm Lie}(\Gamma)$ in \eqref{x16} is surjective. Hence $h_1$ is surjective. This 
implies that $h_2\,=\, 0$.
\end{proof}

{}From Lemma \ref{lem1} and \eqref{x15} we have the short exact sequence
\begin{equation}\label{x17}
0\,\longrightarrow\, H^0(M,\, \text{ad}(E_H)) \,\longrightarrow\, H^0(M,\, \text{At}(E_H)(-\log D))
\, \stackrel{h_1}{\longrightarrow}\, H^0(M,\, TM(-\log D))\,\longrightarrow\, 0.
\end{equation}

Recall that holomorphic sections of $\text{ad}(E_H)$ are precisely the $H$--invariant holomorphic vertical vector fields on
$E_H$ for the projection $\pi$ in \eqref{x1}. Define
\begin{equation}\label{x18}
{\mathcal V}_\Theta\,:=\, \{\gamma\, \in\, H^0(E_H,\, TE_H)^H\,\big\vert\, d\pi(\gamma)\,=\, 0\ \text{ and}\
L_\gamma\Theta\,=\, 0\}\, \subset\, H^0(M,\, \text{ad}(E_H));
\end{equation}
$d\pi$ and $L_\gamma\Theta$ are defined in \eqref{x-1} and \eqref{x9} respectively. Note that ${\mathcal V}_\Theta$
is the Lie algebra of the complex Lie group $\text{Aut}_\Theta(E_H)$ in \eqref{x8}. Indeed, we already  proved  that  $\text{Aut}_\Theta(E_H)$  is a complex Lie group whose Lie  algebra coincides with the subalgebra in 
$H^0(M,\, \text{ad}(E_H))$, given by those vector fields in $H^0(E_H,\, TE_H)^H$ which preserve $\Theta$.  This subalgebra is precisely ${\mathcal V}_\Theta$.

It was noted in the proof of Lemma \ref{lem1} that the restriction of $h_1$ to the subspace ${\rm Lie}(\Gamma)$ in \eqref{x16}
coincides with the homomorphism of Lie algebras associated to the projection $\Gamma\, \longrightarrow\, {\mathbb A}$ in \eqref{x7}.
Therefore, from Proposition \ref{prop1}, \eqref{x7}, \eqref{x18} and \eqref{x17} we have the following commutative diagram
\begin{equation}\label{x19}
	\begin{tikzcd}
	0\arrow[r]&\mathcal{V}_\Theta \arrow[r]\arrow[d] & {\rm Lie}(\Gamma)\arrow[r] \arrow[d] & 
	H^0(M,\, TM(-\log D)) \arrow[r] \arrow[d, "{\rm ID}"] & 0\\
	0 \arrow[r] & H^0(M,\, \text{ad}(E_H))\arrow[r] & H^0(M,\, \text{At}(E_H)(-\log D))\arrow[r, "h_1"] & H^0(M,\, TM(-\log D))\arrow[r] & 0
\end{tikzcd}
	\end{equation}
whose all the vertical arrows are injective.

Recall that {\it a logarithmic connection} on $E_H$ singular on $D$ is a holomorphic homomorphism
$$
\Delta\,\,:\,\, TM(-\log D)\,\, \longrightarrow\,\,\text{At}(E_H)(-\log D)
$$
such that $\varphi\circ\Delta \,=\, {\rm Id}_{TM(-\log D)}$, where $\varphi$ is the projection
in \eqref{x14} \cite{De}.

\begin{definition}\label{def5}
Take any $\mathfrak g$--valued holomorphic $1$--form $\Psi \, \in\, H^0(E_H,\, \Omega^1_{E_H}\otimes {\mathfrak g})$
on $E_H$ and any logarithmic connection $\Delta$ on $E_H$ singular on $D$. The logarithmic connection $\Delta$ is
said to \textit{preserve} $\Psi$ if
$$
L_{\Delta (v)}\Psi \,\,=\,\, 0
$$
for any locally defined holomorphic section $v$ of $TM(-\log D)$ (see \eqref{x9}).
\end{definition}

\begin{proposition}\label{prop2}
Let $(E_H,\,\Theta)$ be a weakly homogeneous Cartan geometry of type $(G,\, H)$ on $M$.
Then the principal $H$--bundle $E_H$ admits a logarithmic connection $\Delta$ singular on $D$ such that
$\Theta$ is preserved by $\Delta$.
\end{proposition}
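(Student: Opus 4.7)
My plan is to construct $\Delta$ by splitting the logarithmic Atiyah sequence \eqref{x14} in a controlled way, exploiting two ingredients already in place: the exactness of the top row of \eqref{x19}, which encodes the weak homogeneity hypothesis, and the triviality of $TM(-\log D)$, which will allow me to propagate a $\mathbb{C}$-linear splitting $\mathcal{O}_M$-linearly.

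First, I would extract a $\mathbb{C}$-linear section from the diagram. By weak homogeneity, $\tau$ in \eqref{x7} is surjective, so the induced Lie algebra map $\text{Lie}(\Gamma) \twoheadrightarrow \mathfrak{a} = H^0(M, TM(-\log D))$ --- namely the right-hand surjection of the top row of \eqref{x19} --- admits a $\mathbb{C}$-linear section $\sigma: \mathfrak{a} \to \text{Lie}(\Gamma)$. Post-composing with the inclusion \eqref{x16}, I regard $\sigma$ as a $\mathbb{C}$-linear section of $h_1$ landing in the subspace of global sections of $\text{At}(E_H)(-\log D)$ whose associated $H$-invariant vector fields on $E_H$ preserve $\Theta$, the last assertion being built into the characterization of $\text{Lie}(\Gamma)$ in Proposition \ref{prop1}.

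Next, I would use the triviality of $TM(-\log D)$ to promote $\sigma$ to a bundle map. Evaluation of global sections gives an identification $\mathcal{O}_M \otimes_\mathbb{C} \mathfrak{a} \xrightarrow{\sim} TM(-\log D)$, so I set
\begin{equation*}
\Delta(fv) \, := \, f\,\sigma(v), \qquad f \in \mathcal{O}_M, \ v \in \mathfrak{a},
\end{equation*}
and extend $\mathcal{O}_M$-linearly. The identity $h_1 \circ \sigma = \text{id}_{\mathfrak{a}}$ on the trivializing frame $\mathfrak{a}$ together with the $\mathcal{O}_M$-linearity of $\Delta$ force $\varphi \circ \Delta = \text{id}_{TM(-\log D)}$, so $\Delta$ is a logarithmic connection on $E_H$ singular on $D$.

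The preservation property is then essentially automatic from the construction. For each $v \in \mathfrak{a}$, the $H$-invariant vector field $\Delta(v) = \sigma(v)$ on $E_H$ lies in $\text{Lie}(\Gamma)$ and hence satisfies $L_{\Delta(v)}\Theta = 0$ by the second condition of Proposition \ref{prop1}; since the elements of $\mathfrak{a}$ trivialize $TM(-\log D)$, this yields the preservation of $\Theta$ by $\Delta$ in the sense of Definition \ref{def5}. The main conceptual step is the translation of weak homogeneity into the required splitting via the diagram \eqref{x19}; once the splitting is available, the $\mathcal{O}_M$-linear extension and the verification of the defining properties of $\Delta$ are formal consequences of the construction.
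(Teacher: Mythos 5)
Your proof is correct and follows essentially the same route as the paper: both choose a $\mathbb{C}$-linear splitting of $h_1$ with image inside $\mathrm{Lie}(\Gamma)$ (available by weak homogeneity via the surjectivity of $\tau$ and diagram \eqref{x19}), and both use the triviality of $TM(-\log D)$ to promote that splitting of global sections to an $\mathcal{O}_M$-linear splitting $\Delta$ of \eqref{x14}, with preservation of $\Theta$ coming from the characterization of $\mathrm{Lie}(\Gamma)$ in Proposition \ref{prop1}. Your pointwise formula $\Delta(fv)=f\,\sigma(v)$ is exactly the paper's $\Delta(w)=\Delta'(\widetilde{w})(x)$ in different notation.
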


\begin{proof}
Fix a $\mathbb C$--linear homomorphism
\begin{equation}\label{lh}
\Delta'\,:\, H^0(M,\, TM(-\log D))\,\, \longrightarrow\,\, H^0(M,\, \text{At}(E_H)(-\log D))
\end{equation}
such that $h_1\circ\Delta'\,=\, {\rm Id}_{H^0(M, TM(-\log D))}$, where $h_1$ is the homomorphism
in \eqref{x19}. Since $TM(-\log D)$ is holomorphically trivial, for any $w\, \in\, TM(-\log D)_x$, $x\, \in\,
M$, there is a unique $\widetilde{w}\, \in\, H^0(M,\, TM(-\log D))$ such that $\widetilde{w}(x)\,=\, w$.
The homomorphism $\Delta'$ in \eqref{lh} produces a homomorphism
\begin{equation}\label{lh2}
\Delta\,\,:\,\, TM(-\log D) \,\, \longrightarrow\, \, \text{At}(E_H)(-\log D)
\end{equation}
that sends any $w\, \in\, TM(-\log D)_x$, $x\, \in\, M$, to $\Delta'(\widetilde{w})(x)\, \in\, \text{At}(E_H)(-\log D)_x$,
where $\widetilde{w}$ is constructed as above from $w$ and $\Delta'$ is the homomorphism
in \eqref{lh}. The property that
$$\varphi\circ\Delta \,=\, {\rm Id}_{TM(-\log D)},$$ where $\varphi$ is the projection in \eqref{x14}, 
follows immediately from the fact that $h_1\circ\Delta'\,=\, {\rm Id}_{H^0(M, TM(-\log D))}$. Consequently,
$\Delta$ defines a logarithmic connection on $E_H$ singular on the divisor $D$.

To prove that   the logarithmic  connection $\Delta$ can be chosen such that   $\Theta$ is preserved, take   $\Delta'$ in \eqref{lh} such that
$$
\Delta'(H^0(M,\, TM(-\log D)))\,\, \subset\, \,{\rm Lie}(\Gamma)\,\, \subset\,\, 
H^0(M,\, \text{At}(E_H)(-\log D))
$$
(see \eqref{x19}). Then the corresponding logarithmic connection $\Delta$ on $E_H$ in \eqref{lh2} 
preserves $\Theta$.  
\end{proof}

In the next subsection we will prove a converse of Proposition \ref{prop2}

\subsection{Connection implies weak homogeneity}

\begin{proposition}\label{prop4}
Let $(E_H,\,\Theta)$ be a holomorphic Cartan geometry of type $(G,\, H)$ on $M$ satisfying the
condition that
the principal $H$--bundle $E_H$ admits a logarithmic connection $\Delta$ singular on $D$ such
that $\Theta$ is preserved by $\Delta$. Then the Cartan geometry $(E_H,\,\Theta)$ is weakly homogeneous.
\end{proposition}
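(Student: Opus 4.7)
The plan is to apply the machinery of Section \ref{se3.1} to $(E_H,\Theta)$ without assuming weak homogeneity a priori, then use $\Delta$ to exhibit a $\mathbb{C}$--linear splitting of the Lie algebra projection $d\tau\colon\text{Lie}(\Gamma)\to\mathfrak{a}$, and finally invoke connectedness of $\mathbb{A}$ to conclude that $\tau\colon\Gamma\to\mathbb{A}$ itself is surjective. As observed at the end of Section \ref{se3.1}, surjectivity of $\tau$ is equivalent to weak homogeneity of $(E_H,\Theta)$. The constructions of $\widetilde{\Gamma}$, $\Gamma$, $\tau$ and the identification of $\text{Lie}(\Gamma)$ given by Proposition \ref{prop1} depend only on $(E_H,\Theta)$: they describe the complex Lie group of automorphisms of $E_H$ preserving $\Theta$ and covering elements of $\mathbb{A}$, together with the natural projection $\tau$.

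For the splitting, take any $v\in\mathfrak{a}=H^0(M,\,TM(-\log D))$ and form $\Delta(v)\in H^0(M,\,\text{At}(E_H)(-\log D))$. By \eqref{x13} and \eqref{x3c}, this section lifts to an $H$--invariant holomorphic vector field $\widetilde{v}$ on $E_H$, tangent to $\mathcal{D}=\pi^{-1}(D)$, satisfying $d\pi(\widetilde{v})=v$. The hypothesis that $\Delta$ preserves $\Theta$ (Definition \ref{def5}) is precisely the statement $L_{\widetilde{v}}\Theta=0$. Hence $\widetilde{v}$ satisfies all three conditions of Proposition \ref{prop1}, giving $\widetilde{v}\in\text{Lie}(\Gamma)$. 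The $\mathbb{C}$--linear assignment $v\mapsto\widetilde{v}$ then splits $d\tau$, since $d\tau(\widetilde{v})=v$ by construction.

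From the existence of the splitting, $d\tau\colon\text{Lie}(\Gamma)\to\mathfrak{a}$ is surjective, so $\tau(\Gamma)$ contains an open neighborhood of the identity in $\mathbb{A}$ and is therefore an open subgroup. Since $\mathbb{A}$ is connected, $\tau(\Gamma)=\mathbb{A}$, which by the equivalence recorded in Section \ref{se3.1} proves weak homogeneity. The only point that requires care is that the setup of Section \ref{se3.1} was introduced under the running hypothesis of weak homogeneity, whereas here we wish to use it as part of the converse; however, the identification of $\text{Lie}(\Gamma)$ in Proposition \ref{prop1} is a local statement at the identity of $\Gamma$ and never invoked surjectivity of $\tau$, so no genuine circularity arises.
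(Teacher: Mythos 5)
Your proposal is correct and follows essentially the same route as the paper's own proof: both construct the group $\Gamma$ of $\Theta$-preserving bundle automorphisms covering elements of $\mathbb{A}$ (noting that this construction does not require weak homogeneity), use the logarithmic connection $\Delta$ together with the hypothesis $L_{\Delta(v)}\Theta=0$ to produce a linear splitting of $d\tau\colon\mathrm{Lie}(\Gamma)\to\mathfrak{a}$ landing inside $\mathrm{Lie}(\Gamma)$, and conclude surjectivity of $\tau$ from surjectivity of $d\tau$ and connectedness of $\mathbb{A}$. Your explicit remark about the non-circularity of reusing Proposition \ref{prop1} is exactly the point the paper handles by re-introducing $\widetilde{\Gamma}$ from scratch inside its proof.
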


\begin{proof}
Denote by $\widetilde{\Gamma}$ the space of all pairs $(\rho,\, \eta)$, where $\rho\, \in\, {\mathbb A}$, and
$\eta\,:\, E_H\, \longrightarrow\, E_H$ is a holomorphic isomorphic such that
\begin{itemize}
\item $\eta(zh)\,=\, \eta(z)h$ for all $z\, \in\, E_H$ and $h\, \in\, H$,

\item $\pi\circ\eta (z)\,=\, \rho(\pi(z))$ for all $z\, \in\, E_H$, and

\item $\eta$ preserve $\Theta$, meaning the composition of homomorphisms
$$
TE_H \,\, \xrightarrow{\,\,\,d\eta\,\,\,}\,\, TE_H \,\, \xrightarrow{\,\,\,\Theta\,\,\,}\,\, E_H\times {\mathfrak g}
$$
coincides with $\Theta$, where $d\eta\, :\, TE_H\, \longrightarrow\, \eta^*TE_H$ is the differential of $\eta$.
\end{itemize}
Our aim is to show that the natural projections
\begin{equation}\label{z1}
\widetilde{\tau}\, :\, \widetilde{\Gamma}\, \longrightarrow\, {\mathbb A},\ \ \, (\rho,\, \eta)\, \longrightarrow\,\rho
\end{equation}
is surjective. We note that $\widetilde{\Gamma}$ is a complex Lie group; this can be shown by
repeating the argument in Section \ref{se3.1}. Let
$$
{\Gamma}\,\, \subset\,\, \widetilde{\Gamma}
$$
be the connected component containing the identity element. Let
\begin{equation}\label{z2}
{\tau}\, :\, {\Gamma}\, \longrightarrow\, {\mathbb A},\ \ \, (\rho,\, \eta)\, \longrightarrow\,\rho
\end{equation}
be the restriction of the map $\widetilde{\tau}$ in \eqref{z1}. Since $\mathbb A$ is connected the homomorphism $\tau$ in \eqref{z2}
is surjective if and only if $\widetilde{\tau}$ is surjective.

Let
\begin{equation}\label{z3}
d{\tau}\, :\, \text{Lie}({\Gamma})\, \longrightarrow\, {\mathfrak a}
\end{equation}
be the homomorphism of Lie algebras corresponding to the homomorphism $\tau$ in \eqref{z2}. To prove that $\tau$ is surjective
it suffices to show that the homomorphism $d\tau$ in \eqref{z3} is surjective.

The Lie algebra $\text{Lie}({\Gamma})$ of the Lie group ${\Gamma}$ in \eqref{z2} is the subspace of
$H^0(E_H,\, TE_H)$ consisting of all $\gamma\, \in\, H^0(E_H,\, TE_H)$ satisfying the following conditions:
\begin{enumerate}
\item $\gamma\, \in\, H^0(E_H,\, TE_H)^H\, \subset\, H^0(E_H,\, TE_H)$,

\item $L_\gamma\Theta\,=\, 0$, where $L_\gamma$ in defined in \eqref{x9}, and

\item $d\pi (\gamma)\, \in\, H^0(M, \, TM(-\log D))$, where $d\pi$ is the homomorphism in \eqref{x-1}.
\end{enumerate}
Indeed, this follows immediately from the proof of Proposition \ref{prop1}. We note that
the Lie bracket operation of ${\rm Lie}(\Gamma)$ is given by the Lie bracket operation of vector fields.

Let
\begin{equation}\label{z6}
\Delta\, :\, TM(-\log D)\,\, \longrightarrow\,\,\text{At}(E_H)(-\log D)
\end{equation}
be a logarithmic connection on $E_H$ singular on $D$ such that $\Theta$ is preserved by $\Delta$.
Let
\begin{equation}\label{z4}
\Delta'\,\, :\,\, H^0(M,\, TM(-\log D)) \,=\, {\mathfrak a}\,\, \longrightarrow\,\, H^0(M,\, \text{At}(E_H)(-\log D))
\end{equation}
be the homomorphism of global sections produced by the homomorphism $\Delta$ in \eqref{z6}. In view of the above description of
${\rm Lie}(\Gamma)$, the given condition that $\Theta$ is preserved by $\Delta$ implies that
\begin{equation}\label{z5}
\Delta'(H^0(M,\, TM(-\log D)))\,\,\subset\,\, {\rm Lie}(\Gamma)\,\,\subset\, \, H^0(M,\, \text{At}(E_H)(-\log D)),
\end{equation}
where $\Delta'$ is the homomorphism in \eqref{z4}. Since $\Delta$ in \eqref{z6} gives a splitting of the logarithmic
Atiyah exact sequence for $E_H$, we conclude that
\begin{equation}\label{z7}
(d\tau) \circ \Delta'\,,=\,\, {\rm Id}_{H^0(M,\, TM(-\log D))}.
\end{equation}
{}From \eqref{z7} it follows immediately that $d\tau$ is surjective. As noted before, this completes the proof.
\end{proof}

Proposition \ref{prop2} and Proposition \ref{prop4} together give the following:

\begin{theorem}\label{thm2}
A holomorphic Cartan geometry $(E_H,\,\Theta)$ of type $(G,\, H)$ on $M$ is weakly homogeneous if and only if
the principal $H$--bundle $E_H$ admits a logarithmic connection $\Delta$ singular on $D$ such
that $\Theta$ is preserved by the logarithmic connection $\Delta$.
\end{theorem}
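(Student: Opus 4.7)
The theorem is the conjunction of two independent implications, and my plan is to prove each one via the Lie-group analysis already set up in Section \ref{se3.1}. Both directions rely on matching two different encodings of ``$\Theta$-preserving lift'': the Lie-algebraic description in Proposition \ref{prop1} of $\text{Lie}(\Gamma) \subset H^0(M, \text{At}(E_H)(-\log D))$ via the conditions $L_\gamma \Theta = 0$ and $d\pi(\gamma) \in H^0(M, TM(-\log D))$, and the sheaf-theoretic condition of Definition \ref{def5}. The bridge between the two is provided by the splitting problem for $h_1$ in the global-sections sequence \eqref{x17}, restricted to $\text{Lie}(\Gamma)$.

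For the forward direction, I would use weak homogeneity to conclude that $\tau: \Gamma \to \mathbb{A}$ from \eqref{x7} is surjective, hence so is its Lie-algebra homomorphism $d\tau: \text{Lie}(\Gamma) \to \mathfrak{a}$. Choosing a $\mathbb{C}$-linear splitting $\Delta': \mathfrak{a} \to \text{Lie}(\Gamma)$, and using the hypothesis that $TM(-\log D)$ is holomorphically trivial (so every fiber vector $w \in TM(-\log D)_x$ has a unique extension $\widetilde{w}$ to a global section), the formula $w \mapsto \Delta'(\widetilde{w})(x)$ defines a bundle homomorphism $\Delta: TM(-\log D) \to \text{At}(E_H)(-\log D)$. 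That $\varphi \circ \Delta = \text{Id}$ is immediate from $h_1 \circ \Delta' = \text{Id}$, so $\Delta$ is a logarithmic connection, and the fact that $\Delta'$ is valued in $\text{Lie}(\Gamma)$ gives $L_{\Delta(v)}\Theta = 0$ by condition (2) of Proposition \ref{prop1}.

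For the converse, given $\Delta$ preserving $\Theta$, I would pass to global sections to obtain $\Delta': \mathfrak{a} \to H^0(M, \text{At}(E_H)(-\log D))$. For each $v \in \mathfrak{a}$, the section $\Delta'(v) = \Delta(v)$ (viewed as a vector field on $E_H$ via the inclusion $\text{At}(E_H)(-\log D) \hookrightarrow TE_H/H$) satisfies all three conditions of Proposition \ref{prop1}: $H$-invariance and the logarithmic condition on $d\pi$ are automatic from $\Delta$ landing in $\text{At}(E_H)(-\log D)$, while $L_{\Delta(v)}\Theta = 0$ is the hypothesis. Hence $\Delta'$ takes values in $\text{Lie}(\Gamma)$ and provides a section of $d\tau$. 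Consequently $d\tau$ is surjective, which upgrades to surjectivity of $\tau$ since $\mathbb{A}$ is connected; by the characterization after \eqref{x7}, this is precisely weak homogeneity.

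The main point requiring care is the translation between the pointwise condition ``$\Delta$ preserves $\Theta$'' and the Lie-algebraic inclusion $\Delta'(\mathfrak{a}) \subset \text{Lie}(\Gamma)$. The triviality hypothesis on $TM(-\log D)$ is indispensable here: it identifies the bundle with the constant bundle with fiber $\mathfrak{a}$, so that a connection is determined by its action on global sections, and a sheaf-theoretic preservation condition becomes a containment of global subspaces. Because both Proposition \ref{prop2} and Proposition \ref{prop4} above implement exactly this argument, the theorem follows by simply combining them.
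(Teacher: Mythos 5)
Your proposal is correct and follows essentially the same route as the paper: the paper proves Theorem \ref{thm2} by simply combining Proposition \ref{prop2} and Proposition \ref{prop4}, whose proofs are exactly the two arguments you describe (splitting $h_1$ over ${\rm Lie}(\Gamma)$ using surjectivity of $\tau$ and the triviality of $TM(-\log D)$ in one direction, and checking the three conditions of Proposition \ref{prop1} to land $\Delta'(\mathfrak a)$ inside ${\rm Lie}(\Gamma)$ and deduce surjectivity of $d\tau$, hence of $\tau$, in the other). Nothing further is needed.
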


\begin{example}\label{ex1}
Take the simplest case of Cartan geometry, namely projective structure on $\mathbb{C}
\mathbb{P}^1$. Consider the standard action of $\mathbb{C}^*\,=\,
{\mathbb C}\setminus\{0\}$ on $\mathbb{C}\mathbb{P}^1$, so 
$0$ and $\infty$ are the only fixed points. So $M\,=\, \mathbb{C}\mathbb{P}^1$, and
$D\,=\, \{0,\, \infty\}$. The unique projective structure on $\mathbb{C}\mathbb{P}^1$
is of course preserved by this action. So Theorem \ref{thm2} gives a logarithmic connection
on the principal $H$-bundle $E_H$ (here $H$ is the Borel subgroup of $\mathrm{PGL}(2,C)$).
Note that $E_H$ does not admit any holomorphic connection. In fact, $\text{ad}(E_H)$ is the unique
nontrivial extension of ${\mathcal O}_{\mathbb{C}\mathbb{P}^1}$ by $K_{\mathbb{C}\mathbb{P}^1}\,=\,
{\mathcal O}_{\mathbb{C}\mathbb{P}^1}(-2)$. So $\text{ad}(E_H)$ does not admit any holomorphic connection.
Therefore, the logarithmic connection on the principal $H$-bundle $E_H$ given by Theorem \ref{thm2}
is a nontrivial logarithmic connection.
\end{example}

\subsection{Criterion for Homogeneous Cartan geometry}

Let $(E_H,\,\Theta)$ be a weakly homogeneous Cartan geometry of type $(G,\, H)$ on $M$.
Consider the projection $\tau$ in \eqref{x7}. We note that $(E_H,\,\Theta)$ has a tautological
$\tau$--homogeneous structure; see Definition \ref{def-h}. Indeed, any $\underline{z}\,=\, (\rho,\, \eta)
\, \in\, \Gamma$ acts on $E_H$ via $\eta\,=\, \tau(\underline{z})$.

\begin{lemma}\label{lem2}
Let ${\mathcal G}$ be a connected complex Lie group and
$$
\gamma\,\, :\,\, {\mathcal G}\,\, \longrightarrow\, {\mathbb A}
$$
a holomorphic homomorphism.
Let $(E_H,\,\Theta)$ be a weakly homogeneous Cartan geometry of type $(G,\, H)$ on $M$.
Giving a $\gamma$--homogeneous structure on $(E_H,\,\Theta)$ is equivalent to giving a holomorphic
homomorphism of Lie groups
$$
\beta\,\, :\,\, {\mathcal G}\,\, \longrightarrow\,\, \Gamma
$$
such that $\tau\circ\beta\,=\, \gamma$, where $\tau$ is the projection in \eqref{x7}.
\end{lemma}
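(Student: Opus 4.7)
The plan is to establish the equivalence by unwinding both definitions and exhibiting mutually inverse constructions. The key observation is that $\Gamma$ itself carries a tautological action on $E_H$, namely $(\rho,\eta)\cdot z := \eta(z)$; the rest of the argument is essentially transporting this action along $\beta$, or recovering $\beta$ from an abstract $\mathcal{G}$--action.

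For the direction from $\beta$ to a $\gamma$--homogeneous structure, given $\beta:\mathcal{G}\to\Gamma$ with $\tau\circ\beta=\gamma$, I write $\beta(g)=(\gamma(g),\eta_g)$ for each $g\in\mathcal{G}$ and define the action
$$
\mathcal{G}\times E_H\,\longrightarrow\, E_H,\qquad (g,z)\,\longmapsto\, \eta_g(z).
$$
That this is a holomorphic action follows from holomorphicity of $\beta$ together with the fact, established in Section \ref{se3.1}, that $\Gamma$ is a complex Lie group whose elements act holomorphically on $E_H$ via the second factor. The group law property reduces to $\eta_{g_1 g_2}=\eta_{g_1}\circ\eta_{g_2}$, which is immediate from $\beta$ being a homomorphism and the definition of the product in $\widetilde{\Gamma}$. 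The three conditions of Definition \ref{def-h} then follow by construction: each $\eta_g$ is $H$--equivariant (so the actions of $\mathcal{G}$ and $H$ commute), $\pi\circ\eta_g=\gamma(g)\circ\pi$ (so $\pi$ is $\mathcal{G}$--equivariant for the action of $\mathcal{G}$ on $M$ induced by $\gamma$), and $\eta_g^{*}\Theta=\Theta$ together with the trivial action of $\mathcal{G}$ on $\mathfrak{g}$ gives exactly the $\gamma$--invariance of $\Theta$ in the sense of Definition \ref{def-h}.

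For the converse, suppose $(E_H,\Theta)$ carries a $\gamma$--homogeneous structure, and let $\sigma_g:E_H\to E_H$ denote the automorphism of $E_H$ given by the action of $g\in\mathcal{G}$. The defining properties of a $\gamma$--homogeneous Cartan geometry say precisely that $\sigma_g$ is an $H$--bundle automorphism lying over $\gamma(g)\in\mathbb{A}$ and that $\sigma_g^{*}\Theta=\Theta$; hence $(\gamma(g),\sigma_g)$ belongs to $\widetilde{\Gamma}$. Since $\mathcal{G}$ is connected and $(\gamma(e),\sigma_e)=(\mathrm{Id}_M,\mathrm{Id}_{E_H})$, the whole image lies in the connected component $\Gamma$. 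Holomorphicity of the map $\beta(g):=(\gamma(g),\sigma_g)$ follows from holomorphicity of $\gamma$ and of the $\mathcal{G}$--action on $E_H$; the homomorphism property follows from $\sigma_{g_1 g_2}=\sigma_{g_1}\circ\sigma_{g_2}$. By construction $\tau\circ\beta=\gamma$.

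The two constructions are mutually inverse: starting from $\beta$, reading off the automorphisms from the $\mathcal{G}$--action gives back the second component of $\beta$, and starting from an action, extracting $(\gamma(g),\sigma_g)$ and applying the first construction returns the same action. The only nontrivial point is the holomorphicity of the $\mathcal{G}$--action built from $\beta$, and this is the main (minor) technical hurdle; it is handled by invoking the Lie group structure on $\Gamma$ and the holomorphic nature of the tautological action of $\Gamma$ on $E_H$ that was used implicitly in Section \ref{se3.1}.
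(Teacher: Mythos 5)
Your proposal is correct and follows essentially the same route as the paper's (very terse) proof: unwind both definitions and pass back and forth between a lift $\beta$ and the tautological action of $\Gamma$ on $E_H$ via the second component. The only addition is that you justify explicitly, via connectedness of $\mathcal G$ and $\beta(e)=({\rm Id}_M,{\rm Id}_{E_H})$, why the image lands in the identity component $\Gamma$ rather than merely in $\widetilde\Gamma$ --- a point the paper dismisses as evident.
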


\begin{proof}
First assume that $(E_H,\,\Theta)$ has a $\gamma$--homogeneous structure. For any
$g\, \in\, {\mathcal G}$, consider the automorphism of $E_H$ over $\gamma (g)\, \in\,
\text{Aut}_D(M)$ given by the action of $g$ on $E_H$. This
automorphism of $E_H$ over $\gamma (g)$ is evidently contained in $\Gamma$. Therefore,
we get a holomorphic homomorphism
$$
\beta\,\, :\,\, {\mathcal G}\,\, \longrightarrow\,\, \Gamma
$$
that sends any $g\, \in\, {\mathcal G}$ to the automorphism of $E_H$ over $\gamma (g)$
given by the action of $g$ on $E_H$. Clearly, we have $\tau\circ\beta\,=\, \gamma$.

Conversely, if
$$
\beta\,\, :\,\, {\mathcal G}\,\, \longrightarrow\,\, \Gamma
$$
is a holomorphic homomorphism with $\tau\circ\beta\,=\, \gamma$, then associating to
any $g\, \in\, {\mathcal G}$ the automorphism of $\beta(g)$ of $E_H$ a
$\gamma$--homogeneous structure on $(E_H,\,\Theta)$ is obtained.
\end{proof}

\section{Logarithmic Cartan geometry}\label{section Log Cartan}

Take $(G,\, H)$ as before. Fix a pair $(V,\, \chi)$, where $V$ is a finite dimensional complex vector space, and
\begin{equation}\label{chi}
\chi\, :\, G\, \longrightarrow\, \text{GL}(V)
\end{equation}
is a holomorphic homomorphism with discrete kernel. Note that $\ker(\chi)$ is
discrete if and only if the homomorphism of Lie algebras corresponding
to $\chi$
\begin{equation}\label{g0}
d\chi\, :\, {\rm Lie}(G)\, =\, {\mathfrak g}\, \longrightarrow\, \text{Lie}(\text{GL}(V))\,=\, \text{End}(V)
\end{equation}
is injective. Such a homomorphism $\chi$ exists if $G$ simply connected (by Ado's
Theorem). The restriction $\chi\big\vert_H$ of $\chi$ to the subgroup $H\, \subset\, G$ will
be denoted by $\chi_H$.

Take $(M,\, D)$ as before. Let $E'_H$ be a holomorphic principal $H$--bundle on
$M'\, :=\, M\setminus D$ and
\begin{equation}\label{g1}
\Theta'\, :\, TE'_H\, \stackrel{\sim}{\longrightarrow}\, E'_H\times {\mathfrak g}
\end{equation}
a holomorphic Cartan geometry on $M'$ of type $(G,\, H)$. Denote by $E^V_H$ the
holomorphic vector bundle on $M'$ associated to $E'_H$ for the homomorphism $\chi_H\,=\, \chi\big\vert_H$.
Also, denote by $E'_H(V)\,=\, E'_H(\text{GL}(V))$ the
holomorphic principal $\text{GL}(V)$--bundle on $M'$ associated to $E'_H$ for the
homomorphism $\chi_H$. So $E^V_H$ is identified with the holomorphic vector bundle on
$M'$ associated to the principal $\text{GL}(V)$--bundle $E'_H(V)$ for the standard action
of $\text{GL}(V)$ on $V$. The isomorphism $\Theta'$ in \eqref{g1} and the homomorphism
$d\chi$ in \eqref{g0} together produce a holomorphic connection on the principal $\text{GL}
(V)$--bundle $E'_H(V)$ \cite[Lemma 3.1]{BDM}; this holomorphic connection on $E'_H(V)$ will
be denoted by ${\mathcal D}'$. The holomorphic connection on $E^V_H$ induced
by ${\mathcal D}'$ will also be denoted by ${\mathcal D}'$.

\begin{definition}\label{def3}
A \textit{logarithmic Cartan geometry} of type $(G,\, H)$ on $(M,\, D)$ is a
holomorphic Cartan geometry $(E'_H,\, \Theta')$ of type $(G,\, H)$ on $M'$ such that
$E^V_H$ extends to a holomorphic vector bundle $\widehat{E}^V_H$ on $M$ satisfying the
condition that the connection ${\mathcal D}'$ on $E^V_H$ is a logarithmic
connection on $\widehat{E}^V_H$. (See \cite[Definition 3.2]{BDM}, \cite[Lemma 3.3]{BDM}.)
\end{definition}

Note that Definition \ref{def3} depends on the choice of the pair $(V,\, \chi)$.

The above connection ${\mathcal D}'$ on the principal $\text{GL}(V)$--bundle $E'_H(V)$
is given by a holomorphic homomorphism of vector bundles over $E'_H(V)$
$$
\Psi\,:\, TE'_H(V)\, \longrightarrow\,E'_H(V)\times \text{End}(V)
$$
such that
\begin{enumerate}
\item $\Psi$ is $\text{GL}(V)$--equivariant (the action of $\text{GL}(V)$ on
$TE'_H(V)$ is given by the action of $\text{GL}(V)$ on $E'_H(V)$ and $\text{GL}(V)$ acts
diagonally on $E'_H(V)\times \text{End}(V)$ using the adjoint action), and

\item $\Psi$ coincides with the Maurer--Cartan form when restricted to any fiber of the
bundle $E'_H(V)$.
\end{enumerate}

The conditions in Definition \ref{def3} that
$E^V_H$ extends to a holomorphic vector bundle $\widehat{E}^V_H$ on $M$ 
such that the connection ${\mathcal D}'$ on $\widehat{E}^V_H$ is a logarithmic
connection on $\widehat{E}^V_H$ is equivalent to the following:
\begin{itemize}
\item The principal $\text{GL}(V)$--bundle $E'_H(V)$ on $M'$ extends to a holomorphic
principal $\text{GL}(V)$--bundle
$$
q_0\,\, :\,\, \widehat{E}_H(V)\,\, \longrightarrow\,\, M$$
on $M$, and

\item $\Psi$ extends to a holomorphic homomorphism
$$
T\widehat{E}_H(V)(-\log q^{-1}_0(D))\, \longrightarrow\,\widehat{E}_H(V)\times \text{End}(V)
$$
over $\widehat{E}_H$.
\end{itemize}
(See \cite[Lemma 3.3]{BDM}.)

\begin{definition}\label{def4}
A logarithmic Cartan geometry $(E'_H,\, \Theta')$ is called \textit{weakly homogeneous}
if for every $\rho\, \in\, \mathbb A$ there is a holomorphic isomorphism of
principal $H$--bundles
$$
f_\rho\, :\, E'_H\, \longrightarrow\, \rho^*E'_H
$$
such that
\begin{itemize}
\item $f_\rho$ takes $\Theta'$ to $\rho^*\Theta'$, and

\item the isomorphism $E^V_H \, \longrightarrow\, \rho^* E^V_H$ induced by $f_\rho$
extends to a holomorphic isomorphism
$$
\widehat{E}^V_H \, \longrightarrow\, f^*_\rho \widehat{E}^V_H.
$$
\end{itemize}
\end{definition}

Note that if the isomorphism $E^V_H \, \longrightarrow\, \rho^* E^V_H$ induced by $f_\rho$
extends to a holomorphic homomorphism
$$
\beta\, :\, \widehat{E}^V_H \, \longrightarrow\, \rho^* \widehat{E}^V_H,
$$
then $\beta$ must be an isomorphism. Indeed, the corresponding homomorphism 
$$
\det\beta\, :\, \det \widehat{E}^V_H \,:=\, \bigwedge\nolimits^{\rm top} \widehat{E}^V_H
\, \longrightarrow\, \det \rho^* \widehat{E}^V_H
$$
is given by a holomorphic function on $M$ which is nonzero on $M'$. Therefore,
$\det\beta$ is an isomorphism, which in turn implies that $\beta$ is an isomorphism.

Let $\widetilde{\Gamma}_l$ denote the space of all pairs $(\rho,\, f_\rho)$, where
$\rho\, \in\, {\mathbb A}$ and 
$$
f_\rho\, :\, E'_H\, \longrightarrow\, \rho^*E'_H
$$
is a holomorphic isomorphism of principal $H$--bundles such that
\begin{itemize}
\item $f_\rho$ takes $\Theta'$ to $\rho^*\Theta'$, and

\item the isomorphism $E^V_H \, \longrightarrow\, \rho^* E^V_H$ induced by $f_\rho$
extends to a holomorphic isomorphism
$$
\widehat{E}^V_H \, \longrightarrow\, \rho^* \widehat{E}^V_H.
$$
\end{itemize}
This $\widetilde{\Gamma}_l$ is a complex Lie group. Let
\begin{equation}\label{g2}
\Gamma_l\, \subset\, \widetilde{\Gamma}_l
\end{equation}
denote the connected component containing the identity element.

Let $\text{Aut}_{\Theta'}(E'_H)$ denote the space of all holomorphic automorphisms
of the principal $H$--bundle $E'_H$
$$
f\, :\, E'_H\, \longrightarrow\, E'_H
$$
such that
\begin{itemize}
\item $f$ takes $\Theta'$ to $\Theta'$, and

\item the automorphism $E^V_H \, \longrightarrow\, E^V_H$ induced by $f$
extends to a holomorphic isomorphism
$$
\widehat{E}^V_H \, \longrightarrow\, \widehat{E}^V_H.
$$
\end{itemize}
So $\text{Aut}_{\Theta'}(E'_H)$ is a complex Lie subgroup of $\widetilde{\Gamma}_l$.
Now define
$$
\text{Aut}^0_{\Theta'}(E'_H)\,\,:=\,\,\Gamma_l\cap \text{Aut}_{\Theta'}(E'_H),
$$
where $\Gamma_l$ is defined in \eqref{g2}. We have a short exact sequence of complex Lie groups
\begin{equation}\label{g3}
1\, \longrightarrow\, \text{Aut}^0_{\Theta'}(E'_H)\, \longrightarrow\, \Gamma_l\,
\stackrel{\xi}{\longrightarrow}\, {\mathbb A}\,\longrightarrow\, 1.
\end{equation}

Take any $H$--invariant holomorphic vector field
$$
\gamma\, \in\, H^0(E'_H,\, TE'_H)^H\, \subset\, H^0(E'_H,\, TE'_H)
$$
on $E'_H$. In other words, $\gamma$ defines a holomorphic section of
$\text{At}(E'_H)$ over $M'$. We have
$$
\text{At}(E^V_H)\,=\, (\text{At}(E'_H)\oplus \text{End}(E^V_H))/\text{ad}(E'_H);
$$
see \eqref{x3a} for the inclusion map $\text{ad}(E'_H)\, \hookrightarrow\,
\text{At}(E'_H)$, while the other inclusion map $\text{ad}(E'_H)\, \hookrightarrow\,
\text{End}(E^V_H)$ is obtained from the fact that $E^V_H$ is associated to the
principal $H$--bundle $E_H$ for the $H$--module $V$ for which $H\bigcap \text{kernel}(\chi)$
is a finite group (see \eqref{chi}). Therefore, the above section $\gamma$ of $\text{At}(E'_H)$
over $M'$ and the zero section of $\text{End}(E^V_H)$ together produce a holomorphic section of
$\text{At}(E^V_H)$ over $M'$; this section of $\text{At}(E^V_H)$ will be denoted by
$\widehat{\gamma}$.

The proof of the following lemma is very similar to the proof of Proposition \ref{prop1}.

\begin{lemma}\label{lem3}
The Lie algebra of $\Gamma_l$ consists of all $\gamma\, \in\, H^0(E'_H,\, TE'_H)$ satisfying the
following three conditions:
\begin{enumerate}
\item $\gamma\, \in\, H^0(E'_H,\, TE'_H)^H\, \subset\, H^0(E'_H,\, TE'_H)$,

\item $L_\gamma \Theta \,=\, 0$, and

\item the section $\widehat{\gamma}\, \in\, H^0(M',\, {\rm At}(E^V_H))$ constructed above from $\gamma$
extends to a section of $H^0(M,\, {\rm At}(\widehat{E}^V_H)(-\log D))$.
\end{enumerate}
The Lie bracket operation of ${\rm Lie}(\Gamma_l)$ is given by the Lie bracket operation of vector fields.
\end{lemma}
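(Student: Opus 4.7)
The plan is to mimic closely the proof of Proposition \ref{prop1}, while inserting one new ingredient: the translation of the extension condition in Definition \ref{def4} into a statement about the logarithmic Atiyah bundle of $\widehat{E}^V_H$. As in Section \ref{se3.1}, I identify a tangent vector of $\widetilde{\Gamma}_l$ at the identity with a one-parameter family $(\rho_t,\, f_{\rho_t})_{t\in(-\varepsilon,\varepsilon)}$ of elements of $\widetilde{\Gamma}_l$ passing through $({\rm Id}_M,\, {\rm Id}_{E'_H})$; replacing the isomorphism $f_{\rho_t}\,:\,E'_H\, \longrightarrow\, \rho_t^*E'_H$ by its composition with the canonical map $\rho_t^*E'_H\,\longrightarrow\, E'_H$, one obtains a holomorphic flow on $E'_H$ covering the flow $\rho_t$ on $M'$, hence a holomorphic vector field $\gamma$ on $E'_H$. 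The first two conditions of the lemma come from exactly the same reasoning as in Proposition \ref{prop1}: $\gamma$ is $H$--invariant because each $f_{\rho_t}$ is an isomorphism of principal $H$--bundles, and $L_\gamma\Theta'\,=\,0$ because each $f_{\rho_t}$ intertwines $\Theta'$ with $\rho_t^*\Theta'$.

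The new ingredient is the third condition. By hypothesis, each isomorphism $E^V_H\, \longrightarrow\, \rho_t^*E^V_H$ induced by $f_{\rho_t}$ extends to a holomorphic isomorphism $\widehat{E}^V_H\, \longrightarrow\, \rho_t^*\widehat{E}^V_H$ of holomorphic vector bundles on all of $M$. Differentiating this family of extensions at $t\,=\,0$ produces a holomorphic vector field on the total space of $\widehat{E}_H(V)$ that covers the lift of $\gamma$ on $M'$ to the ambient vector field on $M$ determined by $d\pi(\gamma)\, \in\, H^0(M,\, TM(-\log D))$, and this lifted vector field is $\text{GL}(V)$--invariant, i.e.\ defines a section of $\text{At}(\widehat{E}_H(V))$ over $M$ mapping to $d\pi(\gamma)\, \in\, H^0(M,\, TM(-\log D))$ under the logarithmic Atiyah projection. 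Using the decomposition $\text{At}(E^V_H)\,=\,(\text{At}(E'_H)\oplus \text{End}(E^V_H))/\text{ad}(E'_H)$ recalled just before the lemma, the section of $\text{At}(\widehat{E}^V_H)$ over $M'$ so obtained is precisely $\widehat{\gamma}$; hence the third condition holds.

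For the converse direction, suppose $\gamma\,\in\, H^0(E'_H,\, TE'_H)$ satisfies the three conditions. Conditions (1) and (2) exactly give, by Proposition \ref{prop1} applied on $M'$ (or rather its proof), a local flow of principal $H$--bundle automorphisms $f_{\rho_t}$ of $E'_H$ covering the flow $\rho_t$ on $M'$ associated with $d\pi(\gamma)\,\in\, \mathfrak a$ and preserving $\Theta'$. Since $d\pi(\gamma)\,\in\, H^0(M,\, TM(-\log D))\,=\, \mathfrak a$, the base flow $\rho_t$ extends to a one-parameter subgroup of $\mathbb A$. The essential point is then to verify that the induced bundle automorphisms extend across $D$ on the associated bundle $\widehat{E}^V_H$; this is where condition (3) is used. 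Indeed, $\widehat{\gamma}$ is by assumption a global section of $\text{At}(\widehat{E}^V_H)(-\log D)$, hence it integrates to a one-parameter family of holomorphic bundle automorphisms of $\widehat{E}^V_H$ covering $\rho_t$ (flows of logarithmic vector fields preserve the divisor and integrate globally on compact $M$); the restriction of this family to $M'$ coincides with the family induced by $f_{\rho_t}$ on $E^V_H$ because they are both produced by integrating the same vector field $\widehat{\gamma}|_{M'}$. Thus $(\rho_t,\, f_{\rho_t})$ lies in $\widetilde{\Gamma}_l$, so its derivative $\gamma$ lies in $\text{Lie}(\Gamma_l)$.

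The main obstacle is the bookkeeping in the third condition: one has to make sure that the passage from a one-parameter family of bundle isomorphisms on $M$ to a logarithmic Atiyah section, and back, really identifies the section $\widehat{\gamma}$ constructed in the paragraph preceding the lemma with the derivative of the extended isomorphisms. This requires using the explicit description of $\text{At}(E^V_H)$ as a quotient of $\text{At}(E'_H)\oplus \text{End}(E^V_H)$ together with the logarithmic Atiyah exact sequence for $\widehat{E}^V_H$. The assertion about the Lie bracket, finally, is standard since all three conditions are preserved under the Lie bracket of vector fields on $E'_H$, and the sum of the three closed subspaces is closed under it.
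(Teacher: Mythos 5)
Your proposal is correct and takes essentially the same approach as the paper: the paper's own proof is a terse version of exactly this argument, matching each of the three conditions on $\gamma$ with the corresponding defining condition of $\widetilde{\Gamma}_l$ by differentiating one-parameter families (and explicitly refers back to the proof of Proposition \ref{prop1} for the mechanism). Your version merely spells out in more detail the flow/integration arguments, in particular the identification of $\widehat{\gamma}$ with the derivative of the extended isomorphisms of $\widehat{E}^V_H$, which the paper leaves implicit.
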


\begin{proof}
Since any element of the Lie algebra of $\Gamma_l$ gives an holomorphic section of $\text{At}(E'_H)$ over $M'$,
it follows that $\gamma\, \in\, H^0(E'_H,\, TE'_H)^H$. The second condition $L_\gamma \Theta \,=\, 0$ corresponds to the
condition in Definition \ref{def4} that $f_\rho$ takes $\Theta'$ to $\rho^*\Theta'$. The third condition ensures that
the homomorphism $q_*\, :\, H^0(M',\, \text{At}(E'_H))\, \longrightarrow\, H^0(M',\, TM')$, given by the natural projection
\begin{equation}\label{j1}
q\, :\, \text{At}(E'_H)\, \longrightarrow\, TM'
\end{equation}
(see \eqref{x3a}), takes $\gamma$ to $\text{Lie}({\mathbb A})\, \subset\, H^0(M',\, TM')$.
\end{proof}

Let $\Gamma_V$ denote the space of all pairs $(\rho,\, F_\rho)$, where
$\rho\, \in\, {\mathbb A}$ and
$$
F_\rho\, :\, \widehat{E}^V_H\, \longrightarrow\, \rho^*\widehat{E}^V_H
$$
is a holomorphic isomorphism of vector bundles such that $F_\rho$ takes the connection
${\mathcal D}'$ on $E^V_H$ (which is a logarithmic connection on $\widehat{E}^V_H$ (see
Definition \ref{def3})) to the logarithmic connection $\rho^* {\mathcal D}'$ on $\rho^*
\widehat{E}^V_H$. This $\Gamma_V$ is evidently a connected complex Lie group. Recall that the
holomorphic connection ${\mathcal D}'$ on the vector bundle $E^V_H$ on $M'$ is induced by
$\Theta'$. From this it follows immediately that $\widetilde{\Gamma}_l$ in \eqref{g2} is a
complex Lie subgroup of $\Gamma_V$. Indeed, consider the injective  group homomorphism which associate, to each pair $(\rho,\, f_\rho)\, \in\, \widetilde{\Gamma}_l$,  the pair $(\rho,\, F_\rho)$, where $F_\rho$ is the (unique)  extension  of $ f_\rho$ to a bundle
homomorphism $\widehat{E}^V_H \, \longrightarrow\, \rho^* \widehat{E}^V_H$.   The property  that $(\rho,\, F_\rho) \, \in\, \Gamma_V$ comes from the fact that $f_\rho$  sends  $\Theta'$ on  $\rho^*\Theta'$  and therefore sends ${\mathcal D}'$ 
 (which is canonically 
determined by $\Theta'$) on $\rho^* {\mathcal D}'$.

Let $\text{Aut}_V({\mathcal D}')$ denote the group of all
holomorphic automorphisms of $\widehat{E}^V_H$ that take ${\mathcal D}'$ to itself. Now using
\eqref{g3} we have the commutative diagram
\begin{equation}\label{g4}
	\begin{tikzcd}
		1\arrow[r]  & \text{Aut}^0_{\Theta'}(E'_H)\arrow[r] \arrow[d] &  \Gamma_l \arrow[r, "P"]\arrow[d]& {\mathbb A}\arrow[r] \arrow[d, "{\rm Id}"] & 1\\
		1 \arrow[r] & \text{Aut}_V({\mathcal D}')\arrow[r] & \Gamma_V \arrow[r,"\widetilde{P}"]& {\mathbb A }\arrow[r]& 1
	\end{tikzcd}
\end{equation}
where $P$ sends any $(\rho,\, f_\rho)\, \in\, \Gamma_l$ to $\rho$ and
$\widetilde P$ sends any $(\rho,\, F_\rho)\, \in\, \Gamma_V$ to $\rho$. The surjectivity of $P$ and $P'$ in the above diagram follow from the assumption that  $(E_H',\Theta')$ is weakly homogeneous.
note that all the vertical arrows in \eqref{g4} are injective. 

\begin{theorem}\label{thm1}
Let $(E'_H,\, \Theta')$  be a  weakly homogeneous logarithmic Cartan geometry. The holomorphic principal
$H$--bundle $E'_H$ admits a holomorphic connection
$\nabla$ such that $\Theta'$ is preserved by $\nabla$. Moreover, there is a logarithmic
connection on $\widehat{E}^V_H$ which induces such a holomorphic connection $\nabla$ on $E'_H$.
\end{theorem}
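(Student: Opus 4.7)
The plan is to imitate the proof of Proposition \ref{prop2}, with Lemma \ref{lem3} playing the role of Proposition \ref{prop1} and the diagram \eqref{g4} replacing the short exact sequence \eqref{x7}. Since $(E'_H,\,\Theta')$ is weakly homogeneous, the projection $P\,:\,\Gamma_l\,\longrightarrow\,{\mathbb A}$ in \eqref{g4} is surjective, and hence the induced Lie algebra map
$$
dP\,:\,{\rm Lie}(\Gamma_l)\,\longrightarrow\,{\mathfrak a}\,=\,H^0(M,\,TM(-\log D))
$$
is surjective. Choose any ${\mathbb C}$-linear splitting $\Delta'\,:\,{\mathfrak a}\,\longrightarrow\,{\rm Lie}(\Gamma_l)$ of $dP$.

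Next I would repeat the triviality-of-$TM(-\log D)$ trick used in \eqref{lh}--\eqref{lh2}. For each $x\,\in\,M$ and $w\,\in\,TM(-\log D)_x$, there is a unique $\widetilde w\,\in\,{\mathfrak a}$ with $\widetilde w(x)\,=\,w$. Evaluating $\Delta'(\widetilde w)$ at $x$ gives, over the open subset $M'$ (where $TM(-\log D)\,=\,TM'$), a bundle homomorphism
$$
\nabla\,:\,TM'\,\longrightarrow\,\text{At}(E'_H)
$$
which splits the Atiyah sequence of $E'_H$ because $dP\circ\Delta'\,=\,{\rm Id}_{\mathfrak a}$. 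Thus $\nabla$ is a holomorphic connection on $E'_H$ over $M'$, and the vanishing $L_{\Delta'(\widetilde w)}\Theta'\,=\,0$ built into ${\rm Lie}(\Gamma_l)$ by Lemma \ref{lem3}(2) immediately gives that $\nabla$ preserves $\Theta'$.

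For the ``moreover'' part I would invoke Lemma \ref{lem3}(3): for every $\gamma\,\in\,{\rm Lie}(\Gamma_l)$, the section $\widehat\gamma$ of $\text{At}(E^V_H)$ on $M'$ extends to a section of $\text{At}(\widehat E^V_H)(-\log D)$ on the whole of $M$. Composing $\Delta'$ with $\gamma\,\mapsto\,\widehat\gamma$ produces a ${\mathbb C}$-linear map ${\mathfrak a}\,\longrightarrow\,H^0(M,\,\text{At}(\widehat E^V_H)(-\log D))$, and a final application of the triviality argument converts it into a bundle map
$$
\widehat\nabla\,:\,TM(-\log D)\,\longrightarrow\,\text{At}(\widehat E^V_H)(-\log D)
$$
defined over all of $M$. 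This $\widehat\nabla$ is the sought logarithmic connection on $\widehat E^V_H$. Since $\widehat\gamma$ is manufactured from $\gamma$ by pairing with the zero section of $\text{End}(E^V_H)$, the restriction of $\widehat\nabla$ to $M'$ coincides with the image of $\nabla$ under the natural map $\text{At}(E'_H)\,\longrightarrow\,\text{At}(E^V_H)$, so $\widehat\nabla$ indeed induces $\nabla$ on $E'_H$ via the extension of structure group along $\chi_H$.

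The main obstacle, in my view, is verifying that $\widehat\nabla$ defined globally on $M$ genuinely splits the logarithmic Atiyah sequence for $\widehat E^V_H$ (the analog of \eqref{x14} for $\widehat E^V_H$), and that its restriction to $M'$ is literally the connection induced from $\nabla$. This reduces to tracking through the definition of $\widehat\gamma$ and checking that the composition of the projection $\text{At}(\widehat E^V_H)(-\log D)\,\longrightarrow\,TM(-\log D)$ with $\widehat\gamma$ recovers $dP(\gamma)\,\in\,H^0(M,\,TM(-\log D))$. Everything else is a line-by-line transcription of the Proposition \ref{prop2} argument.
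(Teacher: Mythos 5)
Your proposal is correct and follows essentially the same route as the paper: a linear splitting of $dP$ (surjective by weak homogeneity), the triviality-of-$TM(-\log D)$ evaluation trick to promote it to a bundle splitting of the Atiyah sequence over $M'$, Lemma \ref{lem3}(2) for preservation of $\Theta'$, and Lemma \ref{lem3}(3) to extend the induced section of ${\rm At}(E^V_H)$ to a logarithmic splitting of ${\rm At}(\widehat{E}^V_H)(-\log D)$ over all of $M$ (the paper phrases this via the inclusion ${\rm Lie}(\Gamma_l)\hookrightarrow {\rm Lie}(\Gamma_V)$, which is the same mechanism). The final compatibility checks you flag are exactly the ones the paper asserts as immediate, and they go through as you describe.
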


\begin{proof}
Let $dP\, :\, \text{Lie}(\Gamma_l)\, \longrightarrow\, \text{Lie}({\mathbb A})\,=\, H^0(M,\, TM(-\log D)$ be the
homomorphism of Lie algebras corresponding to the projection $P$ in \eqref{g4}. The Lie algebra $\text{Lie}(\Gamma_l)$ is
described in Lemma \ref{lem3}. Fix a homomorphism of complex vector spaces 
\begin{equation}\label{g5}
\Phi\,:\, H^0(M,\, TM(-\log D)) \, \longrightarrow\, \text{Lie}(\Gamma_l)
\end{equation}
such that $dP\circ \Phi\,=\, \text{Id}_{H^0(M, TM(-\log D))}$; note that $dP$ is surjective because $P$ is so.

The homomorphism $\Phi$ in \eqref{g5} produces a holomorphic homomorphism
\begin{equation}\label{g6a}
\widehat{\Phi}\,:\, TM' \, \longrightarrow\, \text{At}(E'_H)
\end{equation}
over $M'$ which will be described below. Take any $x\,\in\, M'$ and $w\, \in\, T_xM'$. Since $TM(-\log D)$ is
holomorphically trivial, there is a unique section
$$
\widetilde{w}\, \in\, H^0(M,\, TM(-\log D))
$$
such that $\widetilde{w}(x)\,=\, w$. Now define $\widehat{\Phi}$ in \eqref{g6a} as follows:
$$
\widehat{\Phi}(w)\,:=\, \Phi(\widetilde{w})(x)\, \in\, \text{At}(E'_H)_x.
$$
In order to see that the image $\text{Lie}(\Gamma_l)$  of  $\Phi(\widetilde{w})$ lies in $ \text{At}(E'_H)$, recall  that  statement (1) in     Lemma \ref{lem3}   shows that any element   $\gamma\, \in \text{Lie}(\Gamma_l)$  is  a $H$--invariant holomorphic vector field
$$
\gamma\, \in\, H^0(E'_H,\, TE'_H)^H\, \subset\, H^0(E'_H,\, TE'_H)
$$
on $E'_H$. In other words, $\gamma$ defines a holomorphic section of
$\text{At}(E'_H)$ over $M'$.

Since $\Phi$ in \eqref{g5} satisfies the condition $dP\circ \Phi\,=\, \text{Id}_{H^0(M, TM(-\log D))}$, it follows
immediately that
$$
q\circ \widehat{\Phi}\,=\, {\rm Id}_{TM'},
$$
where $q$ is the projection in \eqref{j1}. Therefore, $\widehat{\Phi}$ defines a holomorphic connection
on the principal $H$--bundle $E'_H$. This connection on $E'_H$ given by $\widehat{\Phi}$ will be denoted by $\nabla$.

Since any $\gamma\, \in\, \text{Lie}(\Gamma_l)$ satisfies the condition that
\item $L_\gamma \Theta \,=\, 0$ (see Lemma \ref{lem3}), it follows that $\nabla$ preserves $\Theta'$.

The diagram in \eqref{g4} gives the following commutative diagram of homomorphism of Lie algebras:
\begin{equation}\label{g6}
\begin{matrix}
\text{Lie}(\Gamma_l) & \stackrel{dP}{\longrightarrow} & H^0(M,\, TM(-\log D)) & \longrightarrow & 0\\
\,\,\,\Big\downarrow J && \,\,\,\,\Big\downarrow {\rm Id}\\
\text{Lie}(\Gamma_V) & \stackrel{d\widetilde P}{\longrightarrow} & H^0(M,\, TM(-\log D)) &\longrightarrow & 0
\end{matrix}
\end{equation}
where $J$ is the homomorphism of Lie algebras corresponding to the injective homomorphism
$\Gamma_l\, \longrightarrow\, \Gamma_V$ in \eqref{g4}.

Consider the homomorphism
$$
J \circ\Phi\,:\, H^0(M,\, TM(-\log D)) \, \longrightarrow\, \text{Lie}(\Gamma_V),
$$
where $\Phi$ and $J$ are the homomorphisms in \eqref{g5} and \eqref{g6} respectively. It produces a holomorphic
homomorphism
\begin{equation}\label{g7}
\widetilde{J}\, :\, TM(-\log D) \, \longrightarrow\, \text{At}(\widehat{E}^V_H)(-\log D),
\end{equation}
which will now be described. Take any $x\,\in\, M$ and $w\, \in\, TM(-\log D)_x$. Since $TM(-\log D)$ is holomorphically trivial, there
is a unique section $\widetilde{w}\, \in\, H^0(M,\, TM(-\log D))$
such that $\widetilde{w}(x)\,=\, w$. Define $\widetilde{J}$ in \eqref{g7} as follows:
$$
\widetilde{J}(w)\,:=\, J \circ\Phi(\widetilde{w})(x)\, \in\, \text{At}(\widehat{E}^V_H)(-\log D)_x.
$$

In order to see that the image $\text{Lie}(\Gamma_l)$  of  $J \circ\Phi(\widetilde{w})$  lies in $\text{At}(\widehat{E}^V_H)(-\log D)$, recall  that  statement (3) in     Lemma \ref{lem3}   shows that any element   $\gamma\, \in \text{Lie}(\Gamma_l)$ 
 canonically defines a holomorphic section of $H^0(M,\, {\rm At}(\widehat{E}^V_H)(-\log D))$.

Since $\Phi$ in \eqref{g5} satisfies the condition $dP\circ \Phi\,=\, \text{Id}_{H^0(M, TM(-\log D))}$, from \eqref{g6}
it follows immediately that
$$
q'\circ \widetilde{J}\,=\, {\rm Id}_{TM (-\log D)},
$$
where $q'\, :\, \text{At}(\widehat{E}^V_H)(-\log D)\, \longrightarrow\, TM(-\log D)$ is the natural projection
(see \eqref{x14}). Therefore, $\widetilde{J}$ defines a logarithmic connection on $\widehat{E}^V_H$ singular over $D$.
The restriction of this logarithmic connection to $E^V_H\, \longrightarrow\, M'$ clearly coincides with the holomorphic
connection on $E^V_H$ induced by the connection $\nabla$ on the principal $H$--bundle $E'_H$ defined
by $\widehat{\Phi}$ in \eqref{g6a}. Therefore, the logarithmic connection on $\widehat{E}^V_H$ defined by $\widetilde{J}$
induces the connection on the principal $H$--bundle $E'_H$ defined by $\widehat{\Phi}$.
This completes the proof.
\end{proof}

\begin{example}\label{ex2}
As in Example \ref{ex1}, consider the standard action of $\mathbb{C}^*\,=\,
{\mathbb C}\setminus\{0\}$ on $\mathbb{C}\mathbb{P}^1$. Set $M\,=\, \mathbb{C}\mathbb{P}^1$, and
$D\,=\, \{0,\, \infty\}$. There is no affine
structure on $\mathbb{C}\mathbb{P}^1$. But there is a $1$-parameter family of weakly homogeneous
logarithmic affine structures on $\mathbb{C}\mathbb{P}^1$. Note that while $H^0(X,\,
K_{\mathbb{C}\mathbb{P}^1})\,=\, 0$, we have $\dim H^0(X,\,
K_{\mathbb{C}\mathbb{P}^1}\otimes {\mathcal O}_{\mathbb{C}\mathbb{P}^1}(0+\infty))\,=\, 1$, and
each holomorphic section of $K_{\mathbb{C}\mathbb{P}^1}\otimes {\mathcal O}_{\mathbb{C}\mathbb{P}^1}
(0+\infty)$ is fixed by the above action of $\mathbb{C}^*$ on $\mathbb{C}\mathbb{P}^1$.
\end{example}

\section*{Acknowledgements}

We thank the referee for helpful comments to improve the exposition.
The first-named author is partially supported by a J. C. Bose Fellowship (JBR/2023/000003).
The third-named author would like to thank the Institute of Eminence, University of Hyderabad (UoH-IoE-RC5-22-003) for the partial support in the form of a grant.



\begin{thebibliography}{ZZZZ}

\bibitem[At]{At} M. F. Atiyah, Complex analytic connections in fibre
bundles, \textit{Trans. Amer. Math. Soc.} \textbf{85} (1957), 181--207.

\bibitem[BD]{BD} I. Biswas and S. Dumitrescu, Branched holomorphic Cartan geometries and
Calabi-Yau manifolds, {\it Int. Math. Res. Not.} (2019), no. 23, 7428--7458.

\bibitem[BDM]{BDM} I. Biswas, S. Dumitrescu and B. McKay, Logarithmic Cartan geometry on
complex manifolds. {\it Jour. Geom. Phy.} {\bf 148} (2020), 103542.

\bibitem[De]{De} P. Deligne, {\it \'Equations diff\'erentielles \`a points singuliers 
r\'eguliers}, Lecture Notes in Mathematics, Vol. 163, Springer-Verlag, Berlin-New York, 1970.

\bibitem[GH]{GH} P. Griffiths and J. Harris, {\it Principles of algebraic geometry},
Pure and Applied Mathematics, Wiley-Interscience, New York, 1978.

\bibitem[Gu]{Gu} R. C. Gunning, {\it On uniformization of complex manifolds: the role of 
connections}, Princeton Univ. Press, 1978.

\bibitem[Sh]{Sh} R. W. Sharpe, {\it Differential geometry. Cartan's generalization of Klein's
Erlangen program}, Graduate Texts in Mathematics, 166. Springer-Verlag, New York, 1997.

\bibitem[Wi]{Wi} J. Winkelmann, On manifolds with trivial logarithmic tangent bundle:
the non-K\"ahler case, {\it Transform. Groups } \textbf{13} (2008), 195--209.

\end{thebibliography}
\end{document}